\renewcommand*{\backref}[1]{}
\renewcommand*{\backrefalt}[4]{%
    \ifcase #1 (Not cited.)%
    \or        (p.\,#2)%
    \else      (pp.\,#2)%
    \fi}
\newfont{\teneufm}{eufm10}
\newfont{\seveneufm}{eufm7}
\newfont{\fiveeufm}{eufm5}
\def\bbbc{{\mathchoice {\setbox0=\hbox{$\displaystyle\rm C$}\hbox{\hbox
to0pt{\kern0.4\wd0\vrule height0.9\ht0\hss}\box0}}
{\setbox0=\hbox{$\textstyle\rm C$}\hbox{\hbox
to0pt{\kern0.4\wd0\vrule height0.9\ht0\hss}\box0}}
{\setbox0=\hbox{$\scriptstyle\rm C$}\hbox{\hbox
to0pt{\kern0.4\wd0\vrule height0.9\ht0\hss}\box0}}
{\setbox0=\hbox{$\scriptscriptstyle\rm C$}\hbox{\hbox
to0pt{\kern0.4\wd0\vrule height0.9\ht0\hss}\box0}}}}
\def\bbbq{{\mathchoice {\setbox0=\hbox{$\displaystyle\rm
Q$}\hbox{\raise 0.15\ht0\hbox to0pt{\kern0.4\wd0\vrule
height0.8\ht0\hss}\box0}} {\setbox0=\hbox{$\textstyle\rm
Q$}\hbox{\raise 0.15\ht0\hbox to0pt{\kern0.4\wd0\vrule
height0.8\ht0\hss}\box0}} {\setbox0=\hbox{$\scriptstyle\rm
Q$}\hbox{\raise 0.15\ht0\hbox to0pt{\kern0.4\wd0\vrule
height0.7\ht0\hss}\box0}} {\setbox0=\hbox{$\scriptscriptstyle\rm
Q$}\hbox{\raise 0.15\ht0\hbox to0pt{\kern0.4\wd0\vrule
height0.7\ht0\hss}\box0}}}}
\def\bbbt{{\mathchoice {\setbox0=\hbox{$\displaystyle\rm
T$}\hbox{\hbox to0pt{\kern0.3\wd0\vrule height0.9\ht0\hss}\box0}}
{\setbox0=\hbox{$\textstyle\rm T$}\hbox{\hbox
to0pt{\kern0.3\wd0\vrule height0.9\ht0\hss}\box0}}
{\setbox0=\hbox{$\scriptstyle\rm T$}\hbox{\hbox
to0pt{\kern0.3\wd0\vrule height0.9\ht0\hss}\box0}}
{\setbox0=\hbox{$\scriptscriptstyle\rm T$}\hbox{\hbox
to0pt{\kern0.3\wd0\vrule height0.9\ht0\hss}\box0}}}}
\def\bbbs{{\mathchoice
{\setbox0=\hbox{$\displaystyle     \rm S$}\hbox{\raise0.5\ht0\hbox
to0pt{\kern0.35\wd0\vrule height0.45\ht0\hss}\hbox
to0pt{\kern0.55\wd0\vrule height0.5\ht0\hss}\box0}}
{\setbox0=\hbox{$\textstyle        \rm S$}\hbox{\raise0.5\ht0\hbox
to0pt{\kern0.35\wd0\vrule height0.45\ht0\hss}\hbox
to0pt{\kern0.55\wd0\vrule height0.5\ht0\hss}\box0}}
{\setbox0=\hbox{$\scriptstyle      \rm S$}\hbox{\raise0.5\ht0\hbox
to0pt{\kern0.35\wd0\vrule height0.45\ht0\hss}\raise0.05\ht0\hbox
to0pt{\kern0.5\wd0\vrule height0.45\ht0\hss}\box0}}
{\setbox0=\hbox{$\scriptscriptstyle\rm S$}\hbox{\raise0.5\ht0\hbox
to0pt{\kern0.4\wd0\vrule height0.45\ht0\hss}\raise0.05\ht0\hbox
to0pt{\kern0.55\wd0\vrule height0.45\ht0\hss}\box0}}}}
\def\bbbz{{\mathchoice {\hbox{$\sf\textstyle Z\kern-0.4em Z$}}
{\hbox{$\sf\textstyle Z\kern-0.4em Z$}} {\hbox{$\sf\scriptstyle
Z\kern-0.3em Z$}} {\hbox{$\sf\scriptscriptstyle Z\kern-0.2em
Z$}}}}
\newtheorem{theorem}{Theorem}
\newtheorem{lemma}[theorem]{Lemma}
\newtheorem{cor}[theorem]{Corollary}
\newtheorem{rem}[theorem]{Remark}
\numberwithin{equation}{section}
\numberwithin{theorem}{section}
\numberwithin{table}{section}
\def\squareforqed{\hbox{\rlap{$\sqcap$}$\sqcup$}}
\def\qed{\ifmmode\squareforqed\else{\unskip\nobreak\hfil
\penalty50\hskip1em\null\nobreak\hfil\squareforqed
\parfillskip=0pt\finalhyphendemerits=0\endgraf}\fi}
\def\cA{{\mathcal A}}
\def\cB{{\mathcal B}}
\def\cC{{\mathcal C}}
\def\cG{{\mathcal G}}
\def\cH{{\mathcal H}}
\def\cI{{\mathcal I}}
\def\cL{{\mathcal L}}
\def\cQ{{\mathcal Q}}
\def\cS{{\mathcal S}}
\def\cU{{\mathcal U}}
\def\cV{{\mathcal V}}
\def\cW{{\mathcal W}}
\def\cX{{\mathcal X}}
\def\cY{{\mathcal Y}}
\def \sf {\mathfrak s}
\def\E{\mathsf {E}}
\def\T{\mathsf {T}}
\def\bcI{\overline \cI}
\def\tcI{\overline \cI}
\newcommand{\ignore}[1]{}
\def \balpha{\bm{\alpha}}
\def \bbeta{\bm{\beta}}
\def \C{\mathbb{C}}
\def \F{\mathbb{F}}
\def \P{\mathbb{P}}
\def \R{\mathbb{R}}
\def\rI{I}
\def\rG{G}
\def\mand{\qquad\mbox{and}\qquad}
\def\\{\cr}
\def\({\left(}
\def\){\right)}
\def\fl#1{\left\lfloor#1\right\rfloor}
\def\ep{\mbox{\bf{e}}_p}
\begin{document}

%%\title{Deterministic Polynomial Factorisation Over Finite Fields}

\title[ Character Sums 
with Intervals and Arbitrary Sets]{Double Character Sums 
with Intervals and Arbitrary Sets in Finite Fields}
%
%\author{Jean~Bourgain}
%\address{Institute for Advanced Study,
%Princeton, NJ 08540, USA}
%\email{bourgain@ias.edu}

%\author{Sergei V.~Konyagin}
%\address{Steklov Mathematical Institute,
%8, Gubkin Street, Moscow, 119991, Russia}
%\email{konyagin@mi.ras.ru}

\author[I. D. Shkredov]{Ilya D. Shkredov}
\address{Steklov Mathematical Institute of Russian Academy
of Sciences, ul. Gubkina 8, Moscow, Russia, 119991, and Institute for Information Transmission Problems  of Russian Academy
of Sciences, Bolshoy Karet\-ny Per. 19, Moscow, Russia, 127994,
and MIPT, Institutskii per. 9, Dolgoprudnii, Russia, 141701}
\email{ilya.shkredov@gmail.com}

\author[I. E.~Shparlinski]{Igor E.~Shparlinski}
\address{Department of Pure Mathematics, University of 
New South Wales, Sydney, NSW 2052 Australia}
\email{igor.shparlinski@unsw.edu.au}

\begin{abstract}  
%We obtain a new bound of certain double sums of multiplicative characters
%improving the range of a similar result of M.-C.~Chang (2008).
We obtain a new bound on certain double sums of multiplicative characters
improving the range of  several previous results.
This improvement comes from new bounds on the number of 
collinear triples in finite fields, which is a classical object of study of additive combinatorics. 
\end{abstract}

\keywords{finite field,  character sums, collinear triples, multiplicative energy}
\subjclass[2010]{11B30, 11L40, 11T30}

\maketitle

\section{Introduction}

\subsection{Motivation and background}

For a prime $p$, let $\F_p$ be the finite field of $p$ elements.

Given a multiplicative character  $\chi$  of the multiplicative group $\F_p^*$ (see~\cite[Chapter~3]{IwKow} 
for a background on characters), we define the bilinear multiplicative character sums
$$
W_\chi(\cI, \cS;\balpha, \bbeta) = \sum_{s\in \cS}  \sum_{x \in \cI} \alpha_s \beta_x    \chi(s+x) \,, 
$$
%$$
%T_\chi(\cI, \cS) = \sum_{u \in \cI} \left|  \sum_{s\in \cS}    \chi(u+s)\right|^2,
%$$
where $\cI = [1,X]$ is an interval, $\cS \subseteq \F_p$ is  an arbitrary  set of cardinality $\# \cS = S$, 
and  $\balpha = \{\alpha_s\}_{s\in \cS}$ and $\bbeta = \{\beta_x\}_{x\in \cI}$  
are two sequence of complex weights with values inside of the unit disk:
\begin{equation}
\label{eq:unit disc}
|\alpha_s| \le 1, \quad s\in \cS, \mand |\beta_x| \le 1, \quad x\in \cI. 
\end{equation}
In particular, the sums 
$$
 \sum_{s\in \cS} \left| \sum_{x \in \cI}  \beta_x    \chi(s+x)
 \right| \mand 
  \sum_{x \in \cI}   \left|   \sum_{s\in \cS}   \alpha_s \chi(s+x)
 \right |
$$
are both of the same shape, and the other way around: to estimate $W_\chi(\cI, \cS;\balpha, \bbeta) $ it is enough to estimate either of these sums.

First we remark that if  
$X \ge p^{1/4+\varepsilon}$ with some  fixed  $\varepsilon>0$, in the case when of the trivial 
weights  $\beta_x =1$, $x\in \cI$, the Burgess bound implies that
$$
\sum_{s\in \cS}  \left| \sum_{x \in \cI}     \chi(s+x)\right| = O(SX p^{-\delta})
$$
for some $\delta> 0$ that depends only on $\varepsilon>0$. 

Furthermore, the result of  Karatsuba~\cite{Kar1}  (see also~\cite[Chapter~VIII, Problem~9]{Kar2})
which applies to general bilinear sums  gives a nontrivial estimate on 
$W_\chi(\cI, \cS;\balpha, \bbeta)$ when 
$$
\min\{S,X\} \ge p^{\varepsilon} \mand \max\{S,X\} \ge p^{1/2+\varepsilon} \,. 
$$

Finally, by  a result of Chang~\cite[Theorem~9]{Chang1}, there is a 
nontrivial bound 
$$
 \sum_{x \in \cI}   \left|  \sum_{s\in \cS}  \chi(s+x)\right| = O(SX p^{-\delta})
$$
provided that for some fixed real $\xi> 0 $ and $\zeta > 0$ we have $S = p^{\xi+o(1)}$, 
$X = p^{\zeta + o(1)}$ and for 
$$
k = \fl{\zeta ^{-1}}
$$
we have 
\begin{equation}
\label{eq:Chang Cond}
\xi > \frac{3k-2 - 4k \zeta }{6k-8}
\end{equation}
(where $\delta> 0$ depends only on $\xi$ and $\zeta$). 

On the other hand, the method of Karatsuba~\cite{Kar1} gives a  similar bound 
under the condition
\begin{equation}
\label{eq:Kar Cond}
\xi > \frac{1- \zeta}{2}
\end{equation}
which is worse than~\eqref{eq:Chang Cond} if $\xi$ and $\zeta$ are close to each other.
For example, for $\zeta=\xi$ the conditions~\eqref{eq:Chang Cond} and~\eqref{eq:Kar Cond}
become 
\begin{equation}
\label{eq:Diag Cond}
\zeta=\xi > 7/22 \mand \zeta=\xi>1/3 \,,
\end{equation}
respectively. 

On the other hand, the condition~\eqref{eq:Kar Cond} is better than~\eqref{eq:Chang Cond} provided that 
\begin{equation}
\label{eq:We win}
1/4 < \zeta < 2/7 \,.
\end{equation}
This indicates that the approach of Karatsuba~\cite{Kar1} is still competitive and  deserves further investigation. Because of this, and because it 
does not seem to be ever presented in full detail, we do this here. Furthermore, we complement this approach 
with some new ingredients  coming from recent advances in additive combinatorics  which 
lead to a  stronger  bound.

There are also several bounds~\cite{BGKS1, BGKS2, BKS2, Chang1, FrIw} but they 
apply only for some special sets $\cS$, such sets with well-spaced elements or sets 
that are contained in short intervals. 

Various bounds of character sums sums with more than two arbitrary sets can be found 
here~\cite{Hans,ShkShp,ShkVol}. 

\subsection{Main results}

We formulate our main result in terms of the  quantity $\E^{+}_3 (\cU, \cV, \cW)$ 
which for   sets  $\cU, \cV, \cW \subseteq\F_p$ is defined
 as the number of the solutions to the 
equation 
\begin{equation}
\begin{split}
\label{eq:E3}
  u_1 &-u_2=v_1-v_2=w_1 - w_2, \\
  u_1,u_2 & \in \cU,\ v_1,v_2 \in \cV,\ w_1,w_2 \in \cW. 
\end{split}
\end{equation}

Assuming that  $\# \cU \ge \# \cV \ge\# \cW$,  the  trivial upper bound for $\E^{+}_3 (\cU, \cV, \cW)$  is 
\begin{equation}
\label{eq:E3 trivial}
\E^{+}_3 (\cU, \cV, \cW)\le \# \cU  \# \cV  \#\cW  \min\{ \# \cU,  \# \cV ,  \#\cW\}\,.
\end{equation}

We recall that the notations $U = O(V)$,  $U \ll V$ and  $V \gg U$  are
all equivalent to the statement that $|U| \le c V$ holds
with some constant $c> 0$, which throughout this work may depend 
on the integer parameter $r \ge 1$.

\begin{theorem}
\label{thm:Gen Set E3}
  For any interval  $\cI = [1,X]$ 
of length $X$
and  any set $\cS \subseteq\F_p^*$  of size $\# \cS=S$
such that  
\begin{equation}
\label{eq:cond SX}
S^2 X \le  p^{2} \mand X < p^{1/2}
\end{equation}
and complex weights $\balpha = \{\alpha_s\}_{s\in \cS}$ and $\bbeta = \{\beta_x\}_{x\in \cI}$  
satisfying~\eqref{eq:unit disc},
for any fixed integer $r \ge 1$ such that $X \ge p^{1/r}$,  we have
\begin{align*}
& W_\chi(\cI, \cS;\balpha, \bbeta)    \\
&\qquad  \ll SX \( \frac{\E^{+}_3 (\cS, \cS, \bcI) p^{(r+1)/r}}{S^4X^3}+\frac{ p^{(r+2)/r}}{S  X^{5/2}} +\frac{p^{(r+2)/r}}{S^{2} X^{2}} \)^{1/4r} p^{o(1)} \\
& \qquad\qquad\qquad\qquad\qquad\qquad\qquad\qquad\qquad \qquad\qquad \qquad\ + S^{1/2} X\,, 
\end{align*} 
where $\bcI = [-X,X]$. 
%where $\bcI = [1,2X]$. 
\end{theorem}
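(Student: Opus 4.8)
The plan is to bound $W_\chi(\cI,\cS;\balpha,\bbeta)$ by a standard "completing + Hölder + energy" scheme, exploiting that one of the two summation variables runs over an interval. First I would apply the Cauchy--Schwarz inequality to eliminate the weights $\alpha_s$ and pass to the $\cS$-variable, writing
\begin{equation*}
|W_\chi(\cI,\cS;\balpha,\bbeta)|^2 \le S \sum_{s\in\cS}\Bigl|\sum_{x\in\cI}\beta_x\chi(s+x)\Bigr|^2 \le S \sum_{s\in\F_p}\Bigl|\sum_{x\in\cI}\beta_x\chi(s+x)\Bigr|^2,
\end{equation*}
where in the last step I drop the restriction $s\in\cS$ only if it helps; more likely I keep $\cS$ and instead raise to a high power $2r$. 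Since $\cI$ is an interval, the inner sum is a short multiplicative character sum that can be shifted, so $\chi(s+x)$ factors multiplicatively; the key is to convert the $x$-average over the interval into an additive-energy count. The natural move, given that the final bound involves $\E^{+}_3(\cS,\cS,\bcI)$, is to take a Hölder inequality to the power $2r$ in the $s$-variable and expand, so that the sum over $s$ and over $2r$-tuples of interval points produces, after an application of the completion technique (replacing the interval characteristic function by its additive-character expansion and bounding the resulting Gauss/Weil sums), a count of the number of solutions to a system of equations whose solution count is controlled by $\E^{+}_3$ together with lower-order diagonal contributions.

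Concretely, I would open the $2r$-th moment
\begin{equation*}
\sum_{s\in\cS}\Bigl|\sum_{x\in\cI}\beta_x\chi(s+x)\Bigr|^{2r}
\end{equation*}
and complete the inner interval sum using the standard bound that replaces summation over $\cI=[1,X]$ by summation over all of $\F_p$ at the cost of a factor of size $p^{o(1)}$ times a Fourier weight, invoking the Weil bound for the resulting pure character sums $\sum_{s}\chi\bigl((s+x_1)\cdots\overline{(s+x_r)}\bigr)$ to extract cancellation unless the $x_i$ coincide in matched pairs. The diagonal terms, where the character arguments cancel identically, will produce precisely a count governed by additive energies of $\cS$ against the difference set $\cI-\cI=\bcI$, which is where $\E^{+}_3(\cS,\cS,\bcI)$ enters; the two remaining terms $p^{(r+2)/r}/(S X^{5/2})$ and $p^{(r+2)/r}/(S^2X^2)$ should come from the off-diagonal Weil-type contributions and from the completion error, respectively. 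The hypotheses $S^2X\le p^2$, $X<p^{1/2}$, and $X\ge p^{1/r}$ are exactly the thresholds that keep the completion error and the Weil terms under control and guarantee the argument of the character is nonzero, so I would track these carefully throughout.

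The main obstacle I anticipate is the combinatorial bookkeeping in the completion-plus-expansion step: after raising to the $2r$-th power and completing, one obtains a sum over $2r$ interval variables coupled through a system of additive equations, and correctly organizing which configurations of the variables force the multiplicative character arguments to cancel (giving the main $\E^{+}_3$ term) versus which leave a genuinely nontrivial character sum amenable to the Weil bound is delicate. In particular, the passage from a general $2r$-fold energy to the specific third-order energy $\E^{+}_3$ rather than a higher-order energy requires a careful choice of which variables to pair and an appeal to the trivial bound~\eqref{eq:E3 trivial} or to monotonicity of energies to collapse the higher-order count; getting the exponent $1/4r$ and the precise shape of the three competing terms right is where the real work lies. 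Finally, the additive term $S^{1/2}X$ is the trivial contribution from the case where the completed sum offers no cancellation, and I would verify at the end that it dominates exactly in the degenerate regime, closing the argument.
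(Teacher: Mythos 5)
There is a genuine gap: your scheme is, at bottom, the classical Karatsuba completion-plus-Weil argument, and it cannot prove this theorem. Two concrete failures. First, after your Cauchy--Schwarz in $s$ the quantity to bound is $\sum_{s\in\cS}\bigl|\sum_{x\in\cI}\beta_x\chi(s+x)\bigr|^{2r}$, and every sum in sight is incomplete: the $x$-variables carry the arbitrary weights $\beta_x$, so the interval sum cannot be completed (Fourier completion needs the unweighted indicator of $\cI$), and the $s$-sum runs over an arbitrary set $\cS$, so the ``pure character sums'' $\sum_s\chi\bigl((s+x_1)\cdots\overline{(s+x_{2r})}\bigr)$ you want to hit with Weil are not complete sums, and Weil does not apply to them. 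In the regime of the theorem, where $X<p^{1/2}$ and (in the interesting cases) $S<p^{1/2}$ as well, there is no variable left to which Weil can be applied; that is exactly why plain completion arguments only work under $\max\{S,X\}\ge p^{1/2+\varepsilon}$, i.e., under~\eqref{eq:Kar Cond}. The paper instead takes the Cauchy inequality in the other variable, in $x$, which strips off $\beta_x$ and leaves an unweighted interval; it then applies the Burgess--Karatsuba shifting substitution $x\mapsto x+yz$ (with $y$ a prime near $Y$ and $z\sim Z$, following Fouvry--Michel), uses multiplicativity to pass to the points $\lambda=(s+x)/y$, $\mu=(t+x)/y$, and only then applies Weil --- to the genuinely complete sum $\sum_{(\lambda,\mu)\in\F_p^2}\bigl|\sum_z\eta_z\chi(\lambda+z)\overline\chi(\mu+z)\bigr|^{2r}$, where the outer variables run over all of $\F_p^2$. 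Without this shifting step the argument cannot get off the ground in the stated range.

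Second, your proposal has no mechanism that produces the term $\E^{+}_3(\cS,\cS,\bcI)$: you say it will come from ``diagonal terms'' of the Weil expansion, but diagonal pairings of interval variables only yield powers of $X$ and $S$, never a quantity sensitive to the additive structure of $\cS$. In the paper, $\E^{+}_3$ enters through Lemma~\ref{lem:Bound NSXY}: after the substitution one must count the solutions of the system~\eqref{eq:syst xsxtyy}; the solutions with common ratio $\lambda=1$ contribute $Y\,\E^{+}_3(\cS,\cS,\bcI)$, while the $\lambda\ne 1$ solutions are controlled by the collinear-triples bound of Lemma~\ref{lem:TABC}, whose proof rests on Rudnev's point--plane incidence theorem (via Lemma~\ref{lem:Misha+}). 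This additive-combinatorial counting step is the paper's main technical innovation and is entirely absent from your plan. (A smaller point: the term $S^{1/2}X$ is simply the diagonal $s=t$ contribution $O(SX^2)$ inside $|W|^2$ after the paper's first Cauchy--Schwarz, not a ``no cancellation'' case of a completed sum.)
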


Combining Theorem~\ref{thm:Gen Set E3} with the trivial bound~\eqref{eq:E3 trivial} we obtain:

\begin{cor}
\label{cor:Gen Set Triv}  Under the condition of Theorem~\ref{thm:Gen Set E3}, we have
\begin{align*}
& W_\chi(\cI, \cS;\balpha, \bbeta)    \\
&\qquad \quad  \ll SX \( \frac{M  p^{(r+1)/r}}{S^2X^2}+\frac{ p^{(r+2)/r}}{S  X^{5/2}} +\frac{p^{(r+2)/r}}{S^{2} X^{2}} \)^{1/4r} p^{o(1)} + S^{1/2} X\,, 
\end{align*}
where $M = \min\{S,X\}$.
\end{cor}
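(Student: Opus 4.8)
The plan is to feed the trivial bound~\eqref{eq:E3 trivial} directly into Theorem~\ref{thm:Gen Set E3}, since the energy $\E^{+}_3$ enters the estimate only through the first term inside the parentheses. Concretely, I would apply~\eqref{eq:E3 trivial} with $\cU = \cV = \cS$ and $\cW = \bcI = [-X,X]$. Here $\# \cS = S$ while $\# \bcI = 2X+1 \ll X$, so the relevant minimum of the three cardinalities is $\min\{S, 2X+1\}$.

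The single point worth checking is that $\min\{S, 2X+1\} \ll M = \min\{S, X\}$: if $S \le X$ this minimum equals $S = M$, while if $S > X$ it is at most $2X+1 \ll X = M$. Hence in either case~\eqref{eq:E3 trivial} yields
\[
\E^{+}_3(\cS, \cS, \bcI) \ll S \cdot S \cdot X \cdot M = S^2 X M\,.
\]
Substituting this into the first summand inside the parentheses of Theorem~\ref{thm:Gen Set E3} gives
\[
\frac{\E^{+}_3(\cS, \cS, \bcI)\, p^{(r+1)/r}}{S^4 X^3} \ll \frac{S^2 X M\, p^{(r+1)/r}}{S^4 X^3} = \frac{M\, p^{(r+1)/r}}{S^2 X^2}\,,
\]
which is precisely the first term displayed in the corollary. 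The two remaining terms inside the parentheses and the additive summand $S^{1/2} X$ do not involve $\E^{+}_3$, so they carry over verbatim from Theorem~\ref{thm:Gen Set E3}.

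Since the argument is a direct substitution, there is no genuine obstacle. The only care required is the elementary comparison $\min\{S, 2X+1\} \ll \min\{S, X\}$ recorded above, together with tracking the implied constants, which is harmless as they are permitted to depend on $r$.
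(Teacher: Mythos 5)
Your proposal is correct and is exactly the paper's own (one-line) argument: substitute the trivial bound~\eqref{eq:E3 trivial} with $\cU=\cV=\cS$, $\cW=\bcI$ into Theorem~\ref{thm:Gen Set E3}, noting $\min\{S,\#\bcI\}\ll M$. Your explicit check that $\min\{S,2X+1\}\ll\min\{S,X\}$ is the only detail the paper leaves implicit, and you handle it correctly.
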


The result of Chang~\cite[Theorem~9]{Chang1} is not fully explicit so it may not be straightforward 
to compare it with Corollary~\ref{cor:Gen Set Triv}, however, see~\eqref{eq:We win} for the range of parameters, where 
Corollary~\ref{cor:Gen Set Triv} is certainly stronger. We also note that 
compared to the original method of  Karatsuba~\cite{Kar1},  our main technical innovation 
is Lemma~\ref{lem:TABC} which can be of independent interest (and thus we present 
in a more general form than we need in this work), see also Remark~\ref{rem:Kar bound}. 

Furthermore, given an interval $\cI = [1, X]$, we have the trivial inequality
\begin{equation}
\label{eq:E3 E2X}
\E^{+}_3 (\cS, \cS, \cI) \ll X\E^{+}(\cS)
\end{equation}
where $\E^{+}(\cS)$ is the {\it additive energy\/} of the set $\cS$, that is, 
$$
\E^{+}(\cS) =\#\{ u_1 +u_2=v_1+v_2~:~  u_1,u_2,v_1,v_2 \in \cS\}\,.
$$ 
Thus we have the following bound
which underlines one of the bounds used in the proof of Theorem~\ref{thm:Subg}
(see the proof of Lemma~\ref{lem:E3 subgr} below). 

\begin{cor}
\label{cor:Gen Set Energy}  Under the condition of Theorem~\ref{thm:Gen Set E3}, we have
\begin{align*}
& W_\chi(\cI, \cS;\balpha, \bbeta)    \\
&\qquad  \ll SX \( \frac{\E^{+} (\cS) p^{(r+1)/r}}{S^4X^2}+\frac{ p^{(r+2)/r}}{S  X^{5/2}} +\frac{p^{(r+2)/r}}{S^{2} X^{2}} \)^{1/4r} p^{o(1)} \\
& \qquad\qquad\qquad\qquad\qquad\qquad\qquad\qquad\qquad \qquad\qquad \qquad\ + S^{1/2} X\,, 
\end{align*}
\end{cor}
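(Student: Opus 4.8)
The plan is to feed the elementary inequality~\eqref{eq:E3 E2X} into Theorem~\ref{thm:Gen Set E3}, in exactly the same way that Corollary~\ref{cor:Gen Set Triv} was deduced from the trivial estimate~\eqref{eq:E3 trivial}. Thus the entire content of the argument is a single substitution into the first term inside the parentheses of Theorem~\ref{thm:Gen Set E3}, the remaining two terms and the additive contribution $S^{1/2}X$ being left untouched.

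First I would check that~\eqref{eq:E3 E2X} remains valid when the one-sided interval $\cI=[1,X]$ is replaced by the symmetric interval $\bcI=[-X,X]$ appearing in Theorem~\ref{thm:Gen Set E3}. Writing $r_{\cA-\cA}(d)$ for the number of representations $d=a_1-a_2$ with $a_1,a_2\in\cA$, the defining equation~\eqref{eq:E3} rearranges into
$$
\E^{+}_3 (\cS, \cS, \bcI) = \sum_d r_{\cS-\cS}(d)^2\, r_{\bcI-\bcI}(d) \,.
$$
Since for any interval $\cJ$ and any $d$ one has $r_{\cJ-\cJ}(d)\le \#\cJ$, and since $\#\bcI = 2X+1 \ll X$, this gives
$$
\E^{+}_3 (\cS, \cS, \bcI) \le \#\bcI \sum_d r_{\cS-\cS}(d)^2 \ll X \sum_d r_{\cS-\cS}(d)^2 = X\, \E^{+}(\cS) \,,
$$
where the final identity is the standard reformulation of the additive energy, $\sum_d r_{\cS-\cS}(d)^2 = \E^{+}(\cS)$, obtained by reading $a_1-a_2=b_1-b_2$ as $a_1+b_2=b_1+a_2$. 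Hence~\eqref{eq:E3 E2X} holds verbatim for $\bcI$, up to the implied constant.

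It then remains to insert this bound into Theorem~\ref{thm:Gen Set E3}. The extra factor $X$ supplied by $\E^{+}_3 (\cS, \cS, \bcI) \ll X\,\E^{+}(\cS)$ cancels one power of $X$ in the denominator $S^4X^3$, so that the first term
$$
\frac{\E^{+}_3 (\cS, \cS, \bcI)\, p^{(r+1)/r}}{S^4X^3} \qquad\mbox{is replaced by}\qquad \frac{\E^{+} (\cS)\, p^{(r+1)/r}}{S^4X^2} \,,
$$
which is precisely the first term in the claimed inequality. This is an entirely routine corollary with no genuine obstacle; the only point worth a moment's care is the passage from $\cI=[1,X]$ to $\bcI=[-X,X]$ recorded above, and this is harmless because both intervals have length comparable to $X$.
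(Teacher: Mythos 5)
Your proposal is correct and takes essentially the same route as the paper, whose proof of this corollary is just the implicit substitution of the bound~\eqref{eq:E3 E2X} into the first term of Theorem~\ref{thm:Gen Set E3}, the extra factor $X$ cancelling one power of $X$ in the denominator $S^4X^3$. Your additional check that~\eqref{eq:E3 E2X} remains valid for the symmetric interval $\bcI=[-X,X]$ (the paper states it only for $\cI=[1,X]$, yet applies it to $\bcI$) is a legitimate point of care rather than a different argument.
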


If $\E^{+}_3 (\cS, \cS, \bcI) \gg S^3 X$, then by~\eqref{eq:E3 E2X}  
the additive energy $\E^{+} (\cS)$ of $\cS$ is large and 
thus $\cS$ is very structured.   
On the other hand, one can easily give examples of sets with small 
quantity $\E^{+}_3 (\cS, \cS, \bcI)$,  for example, 
for random sets $\cS$ or for sets with some prescribed algebraic structure.

Here we give one of such examples, namely when $\cS$ is a multiplicative subgroup $\cG\subseteq \F_p$
for which the sums $W_\chi(\cI, \cS;\balpha, \bbeta)$ have been considered in~\cite{ChangShp,ShpYau}
(in the case of constant weights $\balpha$ and $\bbeta$). 
To simplify the exposition, and enable us to apply a result of Cilleruelo and  Garaev~\cite[Theorem~1]{CillGar}
 we always assume that $\# \cG \le p^{2/5}$. Note that for large subgroups one can use the results and
methods of~\cite{ChangShp,ShpYau}. 
 
\begin{theorem}
\label{thm:Subg} 
For any fixed positive $\zeta < 1/2$ and  $\xi< 2/5$ satisfying
 $$
\xi > \left\{
 \begin{array}{ll} 
 1-5 \zeta/2, & \text{if}\ 6/25 <  \zeta < 10/31\,,\\
 (6-9 \zeta)/16, & \text{if}\ 10/31 \le  \zeta <  134/361\,,\\
(20-40 \zeta)/31, & \text{if}\     134/361  \le  \zeta <  1/2 \, .
 \end{array}
 \right.
$$
there exists some $\delta>0$ such that for any interval  $\cI = [1,X]$ 
of length $X  = p^{\zeta + o(1)}$ and  a multiplicative subgroup $\cG\subseteq \F_p$
of order $T = p^{\xi+o(1)}$,   and complex weights 
$\balpha = \{\alpha_s\}_{s\in \cG}$ and $\bbeta = \{\beta_x\}_{x\in \cI}$  satisfying~\eqref{eq:unit disc},
we have
$$
 W_\chi(\cI, \cG;\balpha, \bbeta)  \ll TX  p^{-\delta}.
$$
\end{theorem}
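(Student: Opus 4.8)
The plan is to deduce Theorem~\ref{thm:Subg} from Corollary~\ref{cor:Gen Set Triv} (and the sharper Theorem~\ref{thm:Gen Set E3}) applied to $\cS = \cG$, $S = T$, combined with a good bound on the additive energy $\E^{+}(\cG)$ of a multiplicative subgroup. First I would record the three regimes in the hypothesis on $\xi$ and $\zeta$ and observe that each corresponds to a different one of the three terms inside the fourth-root factor of our main estimate being dominant; the goal in each regime is to show that term, raised to the power $1/(4r)$, produces a saving $p^{-\delta}$ over the trivial size $TX$, while simultaneously the additive term $S^{1/2}X = T^{1/2}X$ is negligible, i.e. $T^{1/2}X \ll TX p^{-\delta}$, which holds precisely because $\xi>0$. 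I would choose the integer parameter $r = \fl{\zeta^{-1}}$ so that $X \ge p^{1/r}$ as required, and verify that the side conditions~\eqref{eq:cond SX}, namely $T^2 X \le p^2$ and $X < p^{1/2}$, follow from $\xi < 2/5$ and $\zeta < 1/2$.

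The key new input, and the reason subgroups are easier than general sets, is a nontrivial bound on $\E^{+}(\cG)$. Since $\cG$ is a multiplicative subgroup with $T = \#\cG \le p^{2/5}$, I would invoke the result of Cilleruelo and Garaev~\cite[Theorem~1]{CillGar} cited in the text to get an estimate of the shape $\E^{+}(\cG) \ll T^{5/2}$ (or the relevant power they supply in this range), which is far below the trivial $T^3$. Feeding this into Corollary~\ref{cor:Gen Set Energy} controls the first term $\E^{+}(\cG) p^{(r+1)/r}/(T^4 X^2)$. The bound on $\E^{+}(\cG)$ is exactly what is abstracted in the forthcoming Lemma~\ref{lem:E3 subgr}, so I would state and use that lemma to bound $\E^{+}_3(\cG,\cG,\bcI)$ directly, which is marginally stronger than going through~\eqref{eq:E3 E2X}.

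The computation then reduces to an optimization over the exponents. Writing everything as powers of $p$ via $T = p^{\xi+o(1)}$, $X = p^{\zeta+o(1)}$, and $r=\fl{\zeta^{-1}}$, each of the three terms becomes $p^{E_i(\xi,\zeta,r)/(4r)}$ for explicit linear exponents $E_i$, and I would require each to be strictly negative; the exponent $(r+1)/r$ and $(r+2)/r$ both tend to $1$ but the $1/r \approx \zeta$ correction is what creates the piecewise boundaries at $\zeta = 6/25,\,10/31,\,134/361$. Solving $E_i(\xi,\zeta,r) < 0$ for $\xi$ in each $\zeta$-range yields exactly the three stated thresholds, and comparing which threshold is binding determines the breakpoints. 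The main obstacle I expect is the bookkeeping around $r = \fl{\zeta^{-1}}$: because $r$ is an integer, the exponent $1/r$ is only approximately $\zeta$, so the piecewise boundaries arise from where different terms dominate \emph{and} from where $\fl{\zeta^{-1}}$ jumps; care is needed to confirm that in each interval the chosen $r$ both satisfies $X \ge p^{1/r}$ and makes the dominant term optimal, and that the resulting condition on $\xi$ is genuinely the stated one rather than a weaker consequence. Once the strict inequalities are verified on each subinterval, taking $\delta$ to be the minimum of the three resulting savings completes the proof.
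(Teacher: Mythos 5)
Your overall strategy (feed a subgroup energy bound into Theorem~\ref{thm:Gen Set E3} and optimize exponents) points in the right direction, but two of your key steps fail as stated. First, the input you name is misattributed and quantitatively insufficient: Cilleruelo--Garaev~\cite[Theorem~1]{CillGar} does \emph{not} bound the additive energy $\E^{+}(\cG)$; it bounds the quantity $N(\cI^*,\cG)$, the number of pairs from an interval whose ratio lies in $\cG$. The energy bound actually needed is $\E^{+}(\cG)\le T^{49/20}p^{o(1)}$ from~\cite{MRSS}; the bound $\E^{+}(\cG)\ll T^{5/2}$ you write down would only yield the threshold $\xi>(2-4\zeta)/3$, which is strictly weaker than the stated $\xi>(20-40\zeta)/31$, so your argument cannot produce the third regime of the theorem. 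Second, and more seriously, no bound of the form $\E^{+}_3(\cG,\cG,\bcI)\ll X\,\E^{+}(\cG)$ can produce the middle regime $\xi>(6-9\zeta)/16$: that regime comes from the \emph{second} bound of Lemma~\ref{lem:E3 subgr},
$$
\E^{+}_3(\cG,\cG,\cI)\le\(T^2X+T^{4/3}X^{3/2}+T^{11/6}X^2p^{-1/2}+T^{41/24}X^{3/2}p^{-1/8}\)p^{o(1)},
$$
whose proof genuinely exploits the interaction between the interval and the subgroup --- this is precisely where~\cite[Theorem~1]{CillGar} (bounding $N(\cI^*,\cG)$) and the coset decay bound $t_j\ll T^{2/3}j^{-1/3}$ of~\cite{KoSh} enter. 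Your description of Lemma~\ref{lem:E3 subgr} as a ``marginally stronger'' repackaging of the energy bound misses its essential content; without it you lose the entire range $10/31\le\zeta<134/361$.

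There is also a structural error in how you handle $r$ and the breakpoints. Taking $r=\fl{\zeta^{-1}}$ is the \emph{minimal} admissible choice and is the wrong one: with $\zeta$ close to $1/2$ it gives $r=2$ and hence factors $p^{(r+1)/r}=p^{3/2}$, destroying the estimate. The paper instead lets $r\to\infty$ (any fixed $r$ large enough in terms of $\zeta,\xi$), so that $p^{(r+1)/r}$ and $p^{(r+2)/r}$ are effectively $p^{1+o(1)}$ and the saving exponent $\delta$ is of order (negative exponent)$/4r$ for that fixed large $r$. Consequently the piecewise boundaries $\zeta=6/25,\,10/31,\,134/361$ have nothing to do with integer jumps of $\fl{\zeta^{-1}}$, contrary to your claim; they are the intersection points of the limiting linear constraints: the conditions $5\zeta+2\xi>2$ and $\zeta+\xi>1/2$ (from the terms not involving $\E^{+}_3$), the constraint $40\zeta+31\xi>20$ (from the~\cite{MRSS} energy bound), and the constraints $9\zeta+16\xi>6$, $36\zeta+55\xi>21$ (from the second bound of Lemma~\ref{lem:E3 subgr}), e.g.\ $1-5\zeta/2=(6-9\zeta)/16$ at $\zeta=10/31$ and $(6-9\zeta)/16=(20-40\zeta)/31$ at $\zeta=134/361$. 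A correct proof must verify the base conditions and then take the best of the \emph{two} distinct $\E^{+}_3$ bounds, rather than a single energy-based one.
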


We note that the bound of~\cite{ShpYau} is nontrivial only under the 
condition~\eqref{eq:Kar Cond}. 
Furthermore, if $\zeta=\xi$, the bound of Theorem~\ref{thm:Subg}  is nontrivial 
for 
$$
\zeta=\xi > 2/7
$$
improving on what one can derive from~\eqref{eq:Diag Cond}.

\begin{rem}
\label{rem:E3 shift} 
{\rm
Clearly  the quantity $\E^{+}_3 (\cS, \cS, \bcI)$ is invariant under  translations $\cS \to \cS +a$ of $\cS$ 
by $a \in \F_p$.
Thus the bound of Theorem~\ref{thm:Subg} also holds for the sums 
$W_\chi(\cI, \cG;\balpha, \bbeta)$ where $\cI = [a+1, a+X]$ is an arbitrary interval. 
}
\end{rem}

\begin{rem}
\label{rem:E3 vs E2} 
{\rm 
The additive energy of sets plays a central role in additive combinatorics, see~\cite{TaoVu}, 
and has been studied in a vast number of works. For example, using the bound of 
Corvaja and Zannier~\cite[Theorem~2]{CoZa} one derives
$$
\E^{+}_3 \(f(\cG), f(\cG), \bcI\) \ll T^{8/3} X
$$
for a polynomial image $f(\cG)$ of  a multiplicative subgroup $\cG\subseteq \F_p$
of order $T \le  p^{3/4})$ and a polynomial $f(Z) \in \F_p[Z]$ (under some mild conditions on $f$). Together with Theorem~\ref{thm:Gen Set E3} this leads to
new bounds on  the sums 
$W_\chi(\cI,  f(\cG);\balpha, \bbeta)$ complementing the bound of~\cite[Theorem~1.2]{ChangShp}.
}
\end{rem}

Similarly to polynomial images of subgroups in Remark~\ref{rem:E3 vs E2}, 
one can use  bounds on  the additive 
energy of polynomial images of intervals in a combination with Theorem~\ref{thm:Gen Set E3}.
We given only two very concrete applications of this type to character sums over primes. 

\begin{theorem}
\label{thm:Primes} Let $f$ be a  polynomial  over $\F_p$ of degree $d\ge  2$. 
For any $Q=p^{\zeta + o(1)}$ and $R=p^{\xi + o(1)}$ an fixed positive $\zeta$ and  $ \xi\le \min\{1/2, 2 - 2 \zeta\}$ satisfying
$$
5\zeta/4+2\xi > 1 \mand \zeta+5\xi/2 > 1,
$$
for $d=2$ and 
$$
\(1 + 2^{-d+1}\) \zeta+2\xi > 1 \mand \zeta+5\xi/2 > 1,
$$
for $d \ge 3$, 
there exists some $\delta>0$ such that 
$$
  \sum_{\substack{q\le Q\\q~\text{prime}}}  \left|  \sum_{\substack{r \le R\\ r~\text{prime}}}
 \chi(f(q) +r)\right|\,, \  \sum_{\substack{r \le R\\ r~\text{prime}}}  \left| \sum_{\substack{q\le Q\\ q~\text{prime}}} 
 \chi(f(q) +r )\right|  
 \ll QR   p^{-\delta}\,. 
$$
\end{theorem}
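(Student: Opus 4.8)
The plan is to recognise both sums in the statement as instances of the bilinear sum $W_\chi(\cI,\cS;\balpha,\bbeta)$ and then to feed into Corollary~\ref{cor:Gen Set Energy} an external bound for the additive energy of a polynomial image of an interval. First I would set $\cS=\{f(q)~:~q\le Q~\text{prime}\}\subseteq\F_p$ and $\cI=[1,R]$, take $\beta_x$ to be the indicator of the primes in $[1,R]$, and choose $\alpha_s$ to be the unit-modulus phase that turns each absolute value into a nonnegative real contribution, so that $\sum_{s\in\cS}\bigl|\sum_{x\in\cI}\beta_x\chi(s+x)\bigr|$ equals $W_\chi(\cI,\cS;\balpha,\bbeta)$ for weights satisfying~\eqref{eq:unit disc}; the transposed sum is handled identically, as noted after the definition of $W_\chi$. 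Since $f$ has degree $d$, every residue in $\cS$ has at most $d$ preimages, so the outer sum over primes $q$ costs only a factor $O(d)$ over the sum over $s\in\cS$, and $S=\#\cS=p^{\zeta+o(1)}$ while $X=R=p^{\xi+o(1)}$, all stray factors being absorbed into $p^{o(1)}$. The hypotheses~\eqref{eq:cond SX} then hold: $S^2X=p^{2\zeta+\xi}\le p^2$ by $\xi\le 2-2\zeta$, and $X=p^{\xi}<p^{1/2}$ by $\xi\le 1/2$ (with the $p^{o(1)}$ providing the room at the endpoint).

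The heart of the argument is a bound $\E^{+}(\cS)\le\E^{+}\bigl(f([1,Q])\bigr)\ll Q^{\kappa_d+o(1)}$ for the additive energy of the image of the interval, which by~\eqref{eq:E3 E2X} is exactly the quantity built into Corollary~\ref{cor:Gen Set Energy}. For $d=2$ I would complete the square, reducing the defining equation to $y_1^2+y_2^2=y_3^2+y_4^2$, and bound the resulting count over $\F_p$ (controlling wrap-around) to obtain the exponent $\kappa_2=11/4$. For $d\ge 3$ I would apply $(d-1)$-fold Weyl differencing: writing the energy as the number of $(x_i)\in[1,Q]^4$ with $f(x_1)-f(x_2)=f(x_3)-f(x_4)$ and substituting $x_1=x_2+h$, $x_3=x_4+h'$ replaces $f$ by the degree-$(d-1)$ polynomials $g_h(y)=f(y+h)-f(y)$; iterating down to a linear phase produces $\kappa_d=3-2^{1-d}$. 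Note that $\kappa_3=11/4$ coincides with the quadratic value, which explains why the two regimes meet at $d=3$ even though they rest on different arguments.

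Finally I would insert $\E^{+}(\cS)\ll p^{\kappa_d\zeta+o(1)}$ into Corollary~\ref{cor:Gen Set Energy} and let $r\to\infty$, which is legitimate because $\xi>0$ is fixed, so $X\ge p^{1/r}$ for all large $r$ and the exponents $(r+1)/r,(r+2)/r$ tend to $1$; the strictness of the hypotheses leaves slack to absorb the $1/r$ and $2/r$ corrections. The three terms inside the parentheses are then $\le p^{-c}$ for some $c>0$ precisely when $(4-\kappa_d)\zeta+2\xi>1$, $\zeta+5\xi/2>1$, and $2\zeta+2\xi>1$. The first of these becomes $5\zeta/4+2\xi>1$ for $d=2$ and $(1+2^{1-d})\zeta+2\xi>1$ for $d\ge 3$, matching the statement; and since $\kappa_d\ge 2$ one has $2\zeta+2\xi=\bigl[(4-\kappa_d)\zeta+2\xi\bigr]+(\kappa_d-2)\zeta>1$, so the third condition is automatic. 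The secondary term $S^{1/2}X=p^{\zeta/2+\xi}\ll SX\,p^{-\zeta/2}$ is always admissible, and collecting everything gives $W_\chi(\cI,\cS;\balpha,\bbeta)\ll QR\,p^{-\delta}$.

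The main obstacle is the energy bound of the second paragraph: securing the sharp exponent $\kappa_d$ uniformly over $\F_p$ rather than over $\Z$. Concretely, for $d=2$ one must control the wrap-around in $y_1^2+y_2^2=y_3^2+y_4^2$ once $Q^2$ exceeds $p$, where the divisor/sum-of-two-squares heuristic no longer applies directly, and for $d\ge 3$ one must keep the $\F_p$ main term under control through each differencing step so that the gains compound to the full $2^{1-d}$ saving rather than degrading. Everything else is bookkeeping in Corollary~\ref{cor:Gen Set Energy}.
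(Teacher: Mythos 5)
Your reduction and bookkeeping are exactly the paper's: encode both sums as $d\,W_\chi(\cI,\cS;\balpha,\bbeta)$ with $\cS=\{f(q)\}$ and $\beta$ the prime indicator (the $d$-to-one ambiguity absorbed into the weights), apply Corollary~\ref{cor:Gen Set Energy} with $r$ large, and note that the third exponent condition $2\zeta+2\xi>1$ is implied by the first since $\kappa_d>2$. The gap is precisely in what you label the ``main obstacle'' and defer: the energy bound $\E^{+}\bigl(f([1,Q])\bigr)\ll Q^{\kappa_d+o(1)}$. This is not bookkeeping; it is the paper's main new ingredient (Lemma~\ref{lem:pol_images}, Corollary~\ref{cor:pol_images int}, Corollary~\ref{cor:pol_images energy}), and it is obtained from Rudnev's point--plane incidence theorem in $\F_p^3$ (Lemma~\ref{lem:Misha+}) via the H\"older step~\eqref{eq: E2 Tk}. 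The exponents $11/4$ and $3-2^{1-d}$ are characteristic of that incidence bound, and neither of your elementary routes can produce them.

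Concretely, for $d=2$ the completed-square/divisor argument is valid only while $Q^2<p$, where it gives $Q^{2+o(1)}$; once $\zeta>1/2$ the wrap-around is genuine, and the standard repair (for each value $n=y_1^2+y_2^2$, complete the count of points of $[1,Q]^2$ on the conic $y_3^2+y_4^2\equiv n \pmod p$) yields only $\E^{+}\ll Q^4/p+Q^2p^{1/2+o(1)}$, which is \emph{weaker} than $Q^{11/4}$ throughout $p^{1/2}<Q<p^{2/3}$. That loss kills the theorem in part of its stated range: for instance $\zeta=0.55$, $\xi=0.19$ satisfies both hypotheses for $d=2$, but with energy exponent $2\zeta+1/2=1.6$ the first term of Corollary~\ref{cor:Gen Set Energy} forces $\xi>0.2$. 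For $d\ge 3$ the situation is worse: Weyl differencing ``down to a linear phase'' estimates the exponential sum $S(\lambda)=\sum_{x\le Q}\ep(\lambda f(x))$, but the energy is the fourth moment $\frac{1}{p}\sum_{\lambda}|S(\lambda)|^4$, and differencing consumes exactly the variables the moment must exploit; already for $d=3$ one gets only $\frac{1}{p}\sum_{\lambda}|S(\lambda)|^4\ll Q^4/p+Q^{3+o(1)}$, i.e.\ nothing beyond the trivial bound $Q^3$, and the loss compounds with $d$ (note also that wrap-around begins at $\zeta>1/d$ here, so there is essentially no ``over $\Z$'' regime to fall back on). The paper's differencing step~\eqref{eq: induct step} is different in kind: each application \emph{raises the order} of the energy, $\T^{+}_{2^{d}+1}\to\T^{+}_{2^{d-1}+1}$, so no variables are wasted, and the induction bottoms out at a cubic whose $\T^{+}_3$ is estimated by the incidence theorem, giving~\eqref{eq: induct base}. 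Without that ingredient, or an equivalent substitute, your argument proves the theorem only in a strictly smaller range of $(\zeta,\xi)$.
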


 \begin{rem}
\label{rem:Poly Energy} 
{\rm The bound of Theorem~\ref{thm:Primes} makes use  of a bound on  the additive energy of 
polynomial images given in Corollary~\ref{cor:pol_images energy}, which in turn relies on 
a new result from additive combinatorics given in Lemma~\ref{lem:pol_images}.  Furthermore,
 Lemma~\ref{lem:pol_images} implies an improvement of a result of 
 Bukh and Tsimerman~\cite[Theorem~1]{BT}, see Remark~\ref{rem:Imrove BT}. 
 On the other hand, it seems that for polynomials of high degree the approaches of~\cite{Chang2,CCGHSZ,CGOS} are likely to  become 
more efficient.  
}
\end{rem}

\section{Background from Additive Combinatorics}

\subsection{Points--planes incidences}

Now we recall some notions about points--planes incidences, in which we follow~\cite{Shkr}.

First of all, we need a general design bound for the number of incidences.  
Let $\cQ \subseteq \F_p^3$ be a set of points  and $ \Pi$ be a collection of planes in $\F_p^3$. 
Having $q \in\cQ$ and $\pi \in \Pi$ we write 
\begin{displaymath}
\rI (q,\pi) = \left\{ \begin{array}{ll}
1 & \textrm{if } q\in \pi,\\
0 & \textrm{otherwise.}
\end{array} \right.
\end{displaymath}
So, $\rI$ is a $(\# \cQ \times \# \Pi)$-matrix.

 If $ \cQ = \F_p^3$ and $\Pi$ is the family of all planes in $\F_p^3$, then we obtain the matrix $\rG$ and thus $\rI$ is a submatrix of $\rG$. 
One can easily calculate $\rG^t \rG$ and $\rG\, \rG^t$ (where $\rG^t$ 
is the transposition of $\rG$) 
embedding $\F_p^3$ into the projective space $\P\F_p^3$
and check that both of these matrices are of the 
form 
$a\mathbf{Id} + b \mathbf{1}$, where $a,b$ are some scalar coefficients, $\mathbf{Id}$ and $\mathbf{1}$ and identity matrix and all-ones matrices of corresponding dimensions, see, for example,~\cite{TTT,Vinh}.
Moreover, one can check that in our case of points and planes  the following
holds  $a = p^2$ and $b= p+1$ (see~\cite{TTT,Vinh}).
In other words, $G G^t = p^2 \mathbf{Id} + (p+1) \mathbf{1}$.
In view of  
these facts and using the singular decomposition 
(see, for example,~\cite{HoJo}), denoting 
$$
 Q = \# \cQ \,,
$$
we see that
$$
	G (q,\pi) = \sum_{j=1}^{Q} \mu_j u_j (q) v_j (\pi) \,,
$$
where $\mu_j \ge 0$ are square--roots of the eigenvalues of $GG^t$ (which coincide with  square--roots of nonzero eigenvalues of $G^t G$)
and  $u_j$ and $v_j$,   are the  eigenfunctions of $GG^t$ and $G^t G$, 
respectively, $j =1, \ldots, Q$. 
From $GG^t = p^2 \mathbf{Id} + (p+1) \mathbf{1}$, we obtain 
$$
\mu^2_1 = p^2 + (p+1) Q \mand \mu_2 = \ldots = \mu_{Q} = p
$$ 
and  
$$
u_1 (q) = (1,\dots,1) \in \R^{Q}  \mand v_1 (\pi) = (1,\dots,1) \in \R^{P}  \,, 
$$
where $P = \# \Pi$. 
Hence we 
derive 
that 
for any functions $f :  \cQ \to \C$ and $g: \Pi \to \C$, supported only $\cQ$ and $\Pi$, 
respectively, one has 
\begin{align*}
\left|\sum_{q \in \cQ} \sum_{\pi \in \Pi} \rI (q,\pi) f(q) g(\pi) \right| 
& = \left|\sum_{q \in \cQ} \sum_{\pi \in \Pi} \rG (q,\pi) f(q) g(\pi)\right| \\
& = \left| \sum_{j=2}^{Q} \mu_j \langle f, u_j \rangle \langle g, v_j \rangle  \right| \\
& \le p \sum_{j=2}^{Q} |\langle f, u_j \rangle \langle g, v_j \rangle|  \,, 
\end{align*}
provided  that
\begin{equation}
\label{eq:cond Vinh fg}
\sum_{q\in  \cQ} f(q) = 0 \quad \text{or} \quad  \sum_{\pi\in \Pi}g (\pi) = 0 \,.
\end{equation}
Using the Cauchy inequality we now  see that under the condition~\eqref{eq:cond Vinh fg} we have 
\begin{equation}\label{f:Vinh}
\left|\sum_{q \in \cQ} \sum_{\pi \in \Pi} \rI (q,\pi) f(q) g(\pi) \right|  \le p \| f \|_2 \| g \|_2 \,,
\end{equation}
where, as usual  $\|f \|_2$ and $\|g \|_2$ are the  $L^2$-norms of 
functions $f$ and $g$, respectively. 

Furthermore, a  deep 
result on incidences of Rudnev~\cite{Rud} (or see~\cite[Theorem~8]{MPR-NRS} and the proof of~\cite[ Corollary~2]{MuPet2}) combined with the incidence bound 
from~\cite[Section~3]{MuPet1} leads to the following asymptotic formula:

\begin{lemma}
\label{lem:Misha+}
Let  $\cQ \subseteq \F_p^3$ be a set of points and let $\Pi$ be a collection of planes in $\F_p^3$. Suppose that $\# \cQ \le \# \Pi$ and that $k$ is the maximum number of collinear points in $\cQ$. Then
$$
\sum_{q \in \cQ} \sum_{\pi \in \Pi} \rI (q,\pi)  - \frac{\# \cQ \# \Pi}{p} \ll \(\# \cQ\)^{1/2} \# \Pi + k \# \cQ \,.
$$
\end{lemma}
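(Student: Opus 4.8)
First I would write the incidence count as $\sum_{q\in\cQ}\sum_{\pi\in\Pi}\rI(q,\pi)=\sum_{\pi\in\Pi}\#(\cQ\cap\pi)$. A plane of $\F_p^3$ contains $p^2$ of the $p^3$ points of the space, so a typical plane meets $\cQ$ in about $\#\cQ/p$ points and $\#\cQ\,\#\Pi/p$ is the expected number of incidences; more precisely, summing $\#(\cQ\cap\pi)$ over all $p^3+p^2+p$ planes of $\F_p^3$ gives exactly $\#\cQ\,(p^2+p+1)$, whose mean over planes is $\#\cQ/p$. The plan is to bound the deviation from this mean by the point--plane incidence theorem of Rudnev~\cite{Rud}, in the clean form recorded as~\cite[Theorem~8]{MPR-NRS} and used in the proof of~\cite[Corollary~2]{MuPet2}. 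The hypothesis $\#\cQ\le\#\Pi$ places us precisely in the range where that result applies with $\cQ$ as the point set, producing a bound of the shape $\#\cQ\,\#\Pi/p+O\bigl(\#\cQ^{1/2}\#\Pi+R\bigr)$, where $R$ collects the incidences supported on lines carrying many points of $\cQ$.

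Because the algebraic count underlying~\cite{Rud} carries a genuine main term with error in both directions, this is already an asymptotic formula and not merely a one--sided bound; should a separate lower bound be required, one could instead apply the upper bound to the complementary family of planes and subtract from the exact total $\#\cQ\,(p^2+p+1)$ noted above.

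The step I expect to be the crux is bounding the rich-line term $R$ by $k\,\#\cQ$. For this I would partition the lines of $\F_p^3$ dyadically according to the number $t\le k$ of points of $\cQ$ they contain, estimate the incidences carried by each line through the number of planes of $\Pi$ passing through it, and sum over the dyadic levels. Extracting the clean dependence $k\,\#\cQ$---in place of the coarser quantity returned by the unrefined theorem---is exactly the role of the incidence bound from~\cite[Section~3]{MuPet1}. Combining this with the generic error $\#\cQ^{1/2}\#\Pi$ from the first step yields the stated estimate.
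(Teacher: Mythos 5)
Your proposal is essentially the paper's own argument: the paper gives no self-contained proof of this lemma, but obtains it exactly as you do, by citing Rudnev's point--plane incidence theorem in the form of \cite[Theorem~8]{MPR-NRS} together with the proof of \cite[Corollary~2]{MuPet2}, combined with the incidence bound of \cite[Section~3]{MuPet1} to sharpen the rich-line contribution to $k\,\#\cQ$. One small caveat: your fallback complementation device for a lower bound would not work as stated, since the family of planes omitted from $\Pi$ has cardinality of order $p^3$, so that route yields an error term of size $(\#\cQ)^{1/2}p^3$ rather than $(\#\cQ)^{1/2}\,\#\Pi$; this is harmless, however, because the cited results already give the two-sided formula and the paper only ever uses the upper-bound direction.
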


\subsection{On the number of collinear triples}
\label{sec:T(A)}

Given three sets $\cA,\cB,\cC \subseteq \F_p$ we  denote by $\T(\cA,\cB,\cC)$ 
the number of the solutions to the equation
$$
\frac{a_1 - c_1}{b_1-c_1} =\frac{a_2 - c_2}{b_2-c_2} , \qquad   
a_1,a_2\in \cA,\, b_1,b_2 \in B,\, c_1,c_2 \in \cC \,. 
$$
Geometrically, $\T(\cA,\cB,\cC)$ is the number of collinear triples of points 
$$
\((a_1,a_2), \(b_1,b_2\),  (c_1,c_2)\) \in \cA^2\times \cB^2 \times \cC^2 \,.
$$

Let $\cL$ be the set of all lines in $ \F_p^2$. 
We observe that there are exactly $p+1$ lines passing via any 
point in  $ \F_p^2$.

Given $\ell \in \cL$, we denote
$$
\iota_\cA (\ell) = \#\(\ell \cap \cA^2\). 
$$

We denote by $\ \cL^*\(\cA,\cB,\cC\)$ the set of lines $\ell$ having at least two points from 
$(\cA\times \cA) \cup (\cB\times \cB) \cup (\cC\times \cC)$. 
Then we see that 
$$
\T(\cA,\cB,\cC) = \sum_{\ell \in \cL^*\(\cA,\cB,\cC\)} 
\iota_\cA (\ell) \iota_\cB (\ell)  \iota_\cC (\ell)  \,.
$$

Finally, for any real $M>0$ put
$$
L_\cA (M) = \{\ell~:~ M< \iota_\cA(\ell)  \le 2M \} \,.
$$

We need~\cite[Lemma~14]{MPR-NRS}.

\begin{lemma}
\label{lem:L_M}
Let $\cA\subseteq \F_p$ be a set and let $M$ be a real number with 
 $\# \cA \ge M \ge 2\(\# \cA\)^2/p$, then
$$
L_\cA (M) \ll \min \left\{ \frac{p\(\# \cA\)^2}{M^2}, \frac{\(\# \cA\)^5}{M^4} \right\} \,.
$$
\end{lemma}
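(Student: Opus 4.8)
The statement to prove is Lemma~\ref{lem:L_M}, bounding the number of lines $\ell$ with $M < \iota_\cA(\ell) \le 2M$ for a set $\cA \subseteq \F_p$, where $\iota_\cA(\ell) = \#(\ell \cap \cA^2)$.

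\medskip

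The plan is to count incidences between the rich lines in $L_\cA(M)$ and the point set $\cA^2 = \cA \times \cA \subseteq \F_p^2$, and to extract the two competing bounds from two different counting arguments. First I would set $\cA^2$ as the point set and $L_\cA(M)$ as the line set, and note that each line $\ell \in L_\cA(M)$ carries more than $M$ points of $\cA^2$, so the total number of incidences is at least $M \, \# L_\cA(M)$. The two terms in the minimum should then come from bounding this incidence count from above in two distinct regimes.

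\medskip

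For the first bound $p(\# \cA)^2 / M^2$, I would use a second-moment (Cauchy--Schwarz) argument on the number of \emph{collinear pairs} of points of $\cA^2$. A pair of distinct points determines a unique line, so $\sum_{\ell} \binom{\iota_\cA(\ell)}{2} \le \binom{(\#\cA)^2}{2} \approx (\#\cA)^4/2$, but this alone gives only the weaker trivial bound. The key to getting the factor $p$ is the spectral/design incidence inequality \eqref{f:Vinh} available in $\F_p^3$ (restricted to a plane, or its two-dimensional analogue), which controls incidences between points and lines with an error term governed by $p$ times the $L^2$-norms; applied with $f$ and $g$ appropriately mean-zero it yields that the number of incidences between $\cA^2$ and a set of $P$ lines is at most $(\#\cA)^2 P/p + p^{1/2}(\#\cA)\, P^{1/2}$ or similar. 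Combining the lower bound $M\,\#L_\cA(M)$ with such an upper bound and solving for $\#L_\cA(M)$ in the range $M \ge 2(\#\cA)^2/p$ (which is exactly the hypothesis ensuring the main term dominates) should produce $p(\#\cA)^2/M^2$.

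\medskip

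For the second bound $(\#\cA)^5/M^4$, I would instead count collinear \emph{triples}, i.e.\ apply the stronger incidence geometry (Rudnev-type, as in Lemma~\ref{lem:Misha+}) to the point set $\cA^2$ and the lines of $L_\cA(M)$. Each such line contains $\gg M$ points, hence $\gg \binom{M}{3} \gg M^3$ collinear triples, so $M^3 \, \#L_\cA(M)$ is at most the total number of collinear triples in $\cA^2$, which is $\T(\cA,\cA,\cA)$ in the notation above and is bounded by roughly $(\#\cA)^4 + (\#\cA)^6/p$ by the point--line incidence machinery; in the regime where the combinatorial term dominates this gives $\#L_\cA(M) \ll (\#\cA)^5/M^4$ after accounting for the upper constraint $M \le \#\cA$. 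Taking the minimum of the two estimates gives the claim. The main obstacle I anticipate is bookkeeping the exact exponents so that both the hypothesis $\#\cA \ge M \ge 2(\#\cA)^2/p$ and the transition between the two regimes line up cleanly; the geometric inputs (\eqref{f:Vinh} and Lemma~\ref{lem:Misha+}) are already in hand, so the work is in choosing the right level set and distributing the incidence count between the main and error terms, rather than in proving new incidence bounds.
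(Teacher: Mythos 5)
First, a point of reference: the paper itself contains no proof of this lemma --- it is quoted verbatim from \cite[Lemma~14]{MPR-NRS} (the sentence immediately preceding it reads ``We need~[Lemma~14]''), so your attempt has to be measured against the argument in that source. Your treatment of the first bound $p\(\# \cA\)^2/M^2$ is essentially correct and is the standard one: by~\eqref{f:i_scalar} one has $\sum_{\ell}|f_\cA(\ell)|^2 \le p\(\# \cA\)^2$, and the hypothesis $M \ge 2\(\# \cA\)^2/p$ guarantees that every $\ell \in L_\cA(M)$ satisfies $|f_\cA(\ell)| = \iota_\cA(\ell) - \(\# \cA\)^2/p > M/2$, whence $\# L_\cA(M) \ll p\(\# \cA\)^2/M^2$; your version via the two-dimensional (Vinh) analogue of~\eqref{f:Vinh} is the same estimate in disguise.

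The second bound $\(\# \cA\)^5/M^4$ is where your argument has a genuine gap. You bound $M^3\, \# L_\cA(M)$ by the total number of collinear triples $\T(\cA,\cA,\cA)$ and then assert that $\T(\cA,\cA,\cA) \ll \(\# \cA\)^4 + \(\# \cA\)^6/p$. No such bound is available: what the Rudnev point--plane machinery actually yields (see the paper's own Lemma~\ref{lem:TABC}, second case, with $\cA=\cB=\cC$) is
$$
\T(\cA,\cA,\cA) \ll \frac{\(\# \cA\)^6}{p} + \(\# \cA\)^{9/2},
$$
and the exponent $9/2$ is a known barrier; moreover $\(\# \cA\)^4$ is simply false, since for an arithmetic progression $\T(\cA,\cA,\cA) \gg \(\# \cA\)^4 \log\(\# \cA\)$. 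Feeding the true bound into your argument gives only $\# L_\cA(M) \ll \(\# \cA\)^{9/2}/M^3 + \(\# \cA\)^6/(pM^3)$, which is weaker than $\(\# \cA\)^5/M^4$ as soon as $M \gg \(\# \cA\)^{1/2}$, i.e.\ precisely in the range where the claimed bound has content. The structural reason your route cannot work is that a global statistic of $\cA^2$ such as $\T$ knows nothing about $L_\cA(M)$, whereas the target bound needs $\# L_\cA(M)$ to enter the estimate nonlinearly. In \cite{MPR-NRS} (equivalently, via the Stevens--de Zeeuw incidence theorem~\cite{SZ_inc}, itself a consequence of Rudnev's theorem) one instead bounds the number of pairs of points of $\cA^2$ joined by a line \emph{of $L_\cA(M)$}, so the plane set in the point--plane problem is built from the rich lines themselves; this produces an incidence bound of the shape $I\(\cA^2, L\) \ll \(\# \cA\)^{5/4}\(\# L\)^{3/4} + \# L + \(\# \cA\)^2$, and then $M\, \# L \le I$ delivers $\# L \ll \(\# \cA\)^5/M^4$. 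That self-referential use of the rich-line set is the idea missing from your proposal.
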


First we record the trivial identity 
\begin{equation}
\label{eq:i_A}
\sum_{\ell \in \cL} \iota_\cA (\ell)   =  (p+1) \(\# \cA\)^2 \,,
\end{equation}
which holds for any set $\cA \subseteq \F_p$, (as there are exactly $p+1$ lines 
passing through  any   point $(a_1,a_2) \in \F_p^2$).

The next  identity is well--known however,  we give a short  proof for the sake of the completeness.

\begin{lemma}
\label{lem:i_AB}
For $\cA, \cB \subseteq \F_p$, we have
$$
\sum_{\ell \in \cL} \iota_\cA (\ell) \iota_\cB (\ell)  =\(\# \cA \#\cB\)^2 + p \#\( \cA^2\cap\cB^2\) \,.
$$
\end{lemma}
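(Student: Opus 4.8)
The plan is to count the pairs of points in the double sum by fixing a line $\ell$ and noting that $\iota_\cA(\ell)\iota_\cB(\ell)$ counts ordered pairs $\big((a_1,a_2),(b_1,b_2)\big)$ with both points lying on $\ell$, where $(a_1,a_2)\in\cA^2$ and $(b_1,b_2)\in\cB^2$. Summing over all lines $\ell\in\cL$ therefore counts all ordered pairs of points $P=(a_1,a_2)\in\cA^2$ and $R=(b_1,b_2)\in\cB^2$, weighted by the number of lines passing through both $P$ and $R$. The key geometric fact is the standard dichotomy in $\F_p^2$: through two \emph{distinct} points there passes exactly one line, while through a single point (the case $P=R$) there pass exactly $p+1$ lines, as already recorded in~\eqref{eq:i_A}.

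First I would split the sum according to whether the two points coincide. For the pairs with $P\neq R$, each contributes exactly $1$ to $\sum_{\ell}\iota_\cA(\ell)\iota_\cB(\ell)$, so their total contribution is the number of ordered pairs of distinct points in $\cA^2\times\cB^2$, namely $(\#\cA\,\#\cB)^2$ minus the number of coincidences $P=R$. A coincidence $P=R$ requires $a_1=b_1$ and $a_2=b_2$ with $a_i\in\cA$ and $b_i\in\cB$, which forces both coordinates to lie in $\cA\cap\cB$; hence the number of such diagonal points is exactly $\#(\cA^2\cap\cB^2)=\#(\cA\cap\cB)^2$, and I will write it as $\#(\cA^2\cap\cB^2)$ to match the statement. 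So the off-diagonal contribution is $(\#\cA\,\#\cB)^2-\#(\cA^2\cap\cB^2)$.

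Next I would handle the diagonal pairs $P=R$. Each such point is counted with weight equal to the number of lines through it, which is $p+1$. There are $\#(\cA^2\cap\cB^2)$ such points, so their total contribution is $(p+1)\,\#(\cA^2\cap\cB^2)$. Adding the two pieces gives
$$
\sum_{\ell \in \cL} \iota_\cA (\ell) \iota_\cB (\ell)
= \Big((\#\cA\,\#\cB)^2-\#(\cA^2\cap\cB^2)\Big)+(p+1)\,\#(\cA^2\cap\cB^2),
$$
and the $-\#(\cA^2\cap\cB^2)$ cancels one unit of the $(p+1)$ factor, leaving $(\#\cA\,\#\cB)^2+p\,\#(\cA^2\cap\cB^2)$, exactly as claimed.

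There is no real obstacle here; the only point requiring a little care is the bookkeeping of the coincidence set, that is, checking that a pair of points in $\cA^2$ and $\cB^2$ coincides precisely when it corresponds to a common point of $\cA^2$ and $\cB^2$, so that the overcount is governed by $\#(\cA^2\cap\cB^2)$ rather than by something involving $\cA\cup\cB$. Everything else is the clean two-points-determine-a-line dichotomy in the affine plane over $\F_p$.
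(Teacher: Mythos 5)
Your proof is correct and follows exactly the same route as the paper's: exchange the order of summation to count pairs of points weighted by the number of lines through them, split into the off-diagonal case (a unique line through two distinct points, contributing $(\#\cA\,\#\cB)^2 - \#(\cA^2\cap\cB^2)$) and the diagonal case (exactly $p+1$ lines through a single point, contributing $(p+1)\,\#(\cA^2\cap\cB^2)$). The bookkeeping of the coincidence set via $\cA^2\cap\cB^2$ matches the paper's argument as well.
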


\begin{proof}
We have
\begin{align*}
\sum_{\ell \in \cL} \iota_\cA (\ell) \iota_\cB (\ell)  &= \sum_{\ell \in \cL} 
\sum_{\substack{q \in \cA^2\\ q \in \ell}}  
\sum_{\substack{r \in \cB^2\\ r \in \ell}} 1\\
& = \sum_{\substack{(q,r) \in \cA^2\times  \cB^2\\ q \neq r}}\, \sum_{\substack{\ell \in \cL\\ q,r \in \ell}} 1
+\sum_{q \in\cA^2\cap\cB^2 }\,\sum_{\substack{\ell \in \cL\\ q \in \ell}} 1 \,.
\end{align*}

Clearly, since two distinct points define a unique line, we have
$$
\sum_{\substack{(q,r) \in \cA^2\times  \cB^2\\ q \neq r}}\, 
\sum_{\substack{\ell \in \cL\\ q,r \in \ell}} 1
= \(\# \cA \#\cB\)^2 - \#\(\cA^2\cap\cB^2\) \,.
$$
Furthermore, using again that there are exactly $p+1$ lines passing via any 
point in  $ \F_p^2$ we also have
$$
\sum_{q \in \cA^2\cap\cB^2 }\, 
\sum_{\substack{\ell \in \cL\\ q \in \ell}} 1 
= (p+1)\#\(\cA^2\cap\cB^2\) \,.
$$
The result now follows.  
\end{proof}

For a set $\cA  \subseteq \F_p$ we now define the 
function
\begin{equation}
\label{eq:fA def}
f_\cA (\ell)  = \iota_\cA (\ell) - \frac{\(\# \cA\)^2}{p}  \,.
\end{equation}
In particular, we see from Lemma~\ref{lem:i_AB} that
\begin{equation}
\label{f:i_scalar}
\sum_{\ell \in \cL} | f_\cA (\ell)|^2 \le p \(\# \cA\)^2 \,.
\end{equation}

Finally, for any real  $M > 0$ put
$$
K_\cA (M) = \{\ell~:~ |f_\cA(\ell)| > M \} \,.
$$

\begin{lemma}
\label{lem:K_M}
Let $\cA\subseteq \F_p$ be a set and let $M$ be a real number, then
$$
K_\cA (M) \ll \min \left\{ \frac{p\(\# \cA\)^2}{M^2}, \frac{\(\# \cA\)^5}{M^4} \right\} \,.
$$

\end{lemma}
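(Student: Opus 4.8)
The plan is to establish the two halves of the minimum separately and then glue them together by comparing which of the two bounds is smaller. Note first that $\iota_\cA(\ell)$ always lies in $[0,\#\cA]$, and by~\eqref{eq:i_A} its average over all $p(p+1)$ lines equals $(\#\cA)^2/p$; thus $f_\cA(\ell) = \iota_\cA(\ell) - (\#\cA)^2/p$ measures the deviation of $\iota_\cA$ from its mean. One may assume $\cA \ne \emptyset$, since otherwise the statement is vacuous.

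First I would dispose of the bound $K_\cA(M) \ll p(\#\cA)^2/M^2$, which in fact holds for \emph{every} $M>0$: by Markov's inequality together with the second--moment estimate~\eqref{f:i_scalar},
\[
M^2 K_\cA(M) \le \sum_{\ell:\, |f_\cA(\ell)|>M} |f_\cA(\ell)|^2 \le \sum_{\ell\in\cL} |f_\cA(\ell)|^2 \le p(\#\cA)^2.
\]
Since this already yields the full claim whenever $p(\#\cA)^2/M^2 \le (\#\cA)^5/M^4$, it remains only to treat the range in which the second term is the smaller one, namely $M > (\#\cA)^{3/2} p^{-1/2}$. A direct comparison, using $\#\cA \le p$, shows that this threshold is at least the mean value $(\#\cA)^2/p$, so throughout the remaining range we have $M > (\#\cA)^2/p$.

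In that range the lower tail is empty: $f_\cA(\ell) < -M$ would force $\iota_\cA(\ell) < (\#\cA)^2/p - M < 0$, which is impossible. Hence $K_\cA(M)$ counts only lines with $\iota_\cA(\ell) > (\#\cA)^2/p + M =: T$, and I would decompose this set dyadically as $\bigsqcup_{j\ge 0} L_\cA(2^j T)$. Each piece is controlled by Lemma~\ref{lem:L_M}, which gives $L_\cA(2^j T) \ll (\#\cA)^5/(2^j T)^4$; summing the resulting geometric series in $j$ leaves the $j=0$ term dominant, and since $T > M$ this produces $K_\cA(M) \ll (\#\cA)^5/T^4 \ll (\#\cA)^5/M^4$, as required.

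The main obstacle — and the reason for organising the argument around the comparison of the two bounds — is that Lemma~\ref{lem:L_M} applies only when its dyadic parameter satisfies $\#\cA \ge 2^j T \ge 2(\#\cA)^2/p$. The upper constraint is automatic, since $L_\cA(2^j T)$ is empty once $2^j T \ge \#\cA$. The lower constraint is exactly what the regime reduction secures: because $M > (\#\cA)^2/p$ we obtain $T = (\#\cA)^2/p + M > 2(\#\cA)^2/p$, so that $2^j T \ge T > 2(\#\cA)^2/p$ for all $j \ge 0$ and the hypotheses of Lemma~\ref{lem:L_M} hold across the entire dyadic range. Verifying this compatibility, rather than any individual estimate, is the delicate point of the proof.
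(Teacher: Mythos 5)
Your proof is correct and follows essentially the same route as the paper's: the bound $p\(\# \cA\)^2/M^2$ comes from Chebyshev/Markov applied to the second-moment estimate~\eqref{f:i_scalar}, and the bound $\(\# \cA\)^5/M^4$ comes from reducing the tail to the dyadic sets $L_\cA(\cdot)$ and invoking Lemma~\ref{lem:L_M}. The only cosmetic differences are where you place the case split (at the crossover point $\(\# \cA\)^{3/2}p^{-1/2}$ rather than at $2\(\# \cA\)^2/p$) and that you spell out the emptiness of the lower tail and the hypotheses of Lemma~\ref{lem:L_M} in more detail than the paper does.
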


\begin{proof} For $M \ge  2 \(\# \cA\)^2/p$ the result follows from  Lemma~\ref{lem:L_M} as in this case $|f_\cA(\ell)| > M$ implies 
$$
2|f_\cA(\ell)|   \ge \iota_\cA(\ell) \ge  |f_\cA(\ell)|/2 \,. 
$$

For $M <   2\(\# \cA\)^2/p$ we derive from~\eqref{f:i_scalar}
$$
K_\cA (M) \ll \frac{p\(\# \cA\)^2}{M^2} \,.
$$
Since  for $M  <   2\(\# \cA\)^2/p \le 2\(\# \cA\)^{3/2}/p^{1/2}$
we have
$$
\frac{p\(\# \cA\)^2}{M^2} \ll   \frac{\(\# \cA\)^5}{M^4} 
$$
and the result follows. 
\end{proof}

 Finally, we are ready to establish one of our main technical result which 
 we believe is of independent interest. 
 
\begin{lemma}
\label{lem:TABC}
Let $\cA,\cB,\cC \subseteq \F_p$ be sets. 
Put $Z=\max\{ \# \cA, \# \cB, \#\cC \}$. Then
$$
\T(\cA,\cB,\cC) - \frac{\(\# \cA \#\cB \# \cC\)^2}{p} 
\ll \begin{cases}
p \# \cA \#\cB \# \cC \,,\\ 
\(\# \cA \#\cB \# \cC\)^{3/2} +  \# \cA \#\cB \# \cC Z\,  \\
 \sqrt{p} \(\# \cA \#\cB \# \cC\)^{7/6} + Z^4\,.
\end{cases}
$$
\end{lemma}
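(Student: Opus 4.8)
The plan is to begin from the identity $\T(\cA,\cB,\cC)=\sum_{\ell\in\cL^*}\iota_\cA(\ell)\iota_\cB(\ell)\iota_\cC(\ell)$ and to separate the main term $(\#\cA\#\cB\#\cC)^2/p$ before estimating the fluctuation. Writing $f_\cX=\iota_\cX-(\#\cX)^2/p$ for $\cX\in\{\cA,\cB,\cC\}$ as in~\eqref{eq:fA def} and multiplying out $\iota_\cA\iota_\cB\iota_\cC=\prod_\cX(f_\cX+(\#\cX)^2/p)$, the constant term produces $(\#\cA\#\cB\#\cC)^2/p$, the three terms linear in a single $f_\cX$ average out, and the three bilinear terms are bounded through~\eqref{f:i_scalar} and the pointwise estimate $\|f_\cX\|_\infty\ll\#\cX$ (valid since $\iota_\cX(\ell)\le\#\cX$); this contributes $\ll\#\cA\#\cB\#\cC\,Z$, which is exactly the second summand of the middle bound and, because $\#\cA\#\cB\#\cC\le Z^3$, is swallowed by $Z^4$ in the last bound. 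Everything then reduces to the triple term $T_3=\sum_{\ell\in\cL^*}f_\cA f_\cB f_\cC$. I stress that the restriction to $\cL^*$ must be kept throughout: the lines carrying a single point of $\cA^2\cup\cB^2\cup\cC^2$ contribute $\asymp p(\#(\cA\cap\cB\cap\cC))^2$, which for small sets exceeds the middle and last bounds, so one may not pass to all of $\cL$.

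For the first bound I would use the crude $L^\infty$--$L^2$--$L^2$ splitting $|T_3|\le\|f_\cC\|_\infty\|f_\cA\|_2\|f_\cB\|_2$. By~\eqref{f:i_scalar} we have $\|f_\cX\|_2\le p^{1/2}\#\cX$, and $\|f_\cC\|_\infty\ll\#\cC$, so $|T_3|\ll\#\cC\cdot p^{1/2}\#\cA\cdot p^{1/2}\#\cB=p\,\#\cA\#\cB\#\cC$. Here one may even return to all of $\cL$, since the single-point correction is itself $\ll p\,\#\cA\#\cB\#\cC$; this is the regime governed by the spectral bound~\eqref{f:Vinh}.

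The remaining two bounds come from the same device: a dyadic decomposition of $T_3$ over the sizes $\iota_\cX(\ell)\sim\lambda_\cX$ of the three line--multiplicities, estimated by whichever rich--line bound is sharpest. For the last bound I would use Lemma~\ref{lem:K_M} and Lemma~\ref{lem:L_M}, namely $\#\{\ell:\iota_\cX(\ell)\ge\lambda\}\ll\min\{p(\#\cX)^2/\lambda^2,\ (\#\cX)^5/\lambda^4\}$. The spectral branch $p(\#\cX)^2/\lambda^2$ and the Rudnev branch $(\#\cX)^5/\lambda^4$ cross at $\lambda\sim(\#\cX)^{3/2}p^{-1/2}$; bounding the per--scale count by the geometric mean of the three individual bounds, the dyadic sum is dominated by this crossover scale and evaluates to $\ll p^{1/2}(\#\cA\#\cB\#\cC)^{7/6}$, with the top scale $\lambda\sim\#\cX$ and the degenerate and bilinear contributions gathered into the additive $Z^4$.

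For the middle bound I would replace the spectral branch by the elementary Cauchy--Schwarz bound $\#\{\ell:\iota_\cX(\ell)\ge\lambda\}\ll(\#\cX)^4/\lambda^2$, which follows from~\eqref{eq:i_A} and Lemma~\ref{lem:i_AB} (the $\sim(\#\cX)^4$ collinear pairs inside $\cX^2$ are distributed over lines) and is free of $p$. Combined with the Rudnev branch $(\#\cX)^5/\lambda^4$, the crossover now sits at $\lambda\sim(\#\cX)^{1/2}$, and the dyadic sum is again dominated there, giving the $p$--free bound $\ll(\#\cA\#\cB\#\cC)^{3/2}$; the boundary scales reproduce the $\#\cA\#\cB\#\cC\,Z$ term already met above. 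The step I expect to be delicate is exactly this bookkeeping: one must verify that each elementary/spectral/Rudnev estimate is used only in its range of validity (in particular the elementary bound only for $\lambda\gtrsim(\#\cX)^2/p$), that the three dyadic sums are genuinely dominated by their crossover scales, and that the $\cL^*$--restriction and the main--term extraction remain consistent for small sets, where the single--incidence lines dominate and cannot be folded back in.
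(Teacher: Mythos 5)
Your treatment of the first and third bounds is essentially the paper's own argument. For the third bound, your ``dyadic decomposition plus geometric mean at the crossover scale'' is precisely the paper's proof: it fixes thresholds $\Delta_\cU = \#\cU\(\# \cA \#\cB \# \cC\)^{1/6}p^{-1/2}$ (your crossover), bounds the lines with some $|f_\cU|<\Delta_\cU$ by $p\Delta_\cU\,\#\cV\#\cW$ via~\eqref{f:i_scalar}, and applies H\"older together with $\sum_{|f_\cU|\ge \Delta_\cU}|f_\cU|^3\ll (\#\cU)^5/\Delta_\cU$ from Lemma~\ref{lem:K_M}; the reduction to the triple term with $O(Z^4)$ error is also the paper's, done sequentially via Lemma~\ref{lem:i_AB}. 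For the first bound the paper instead uses the spectral inequality~\eqref{f:Vinh} in the point--plane formulation, but your $L^\infty$--$L^2$--$L^2$ estimate on lines is a valid equivalent.

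The genuine gap is the middle bound, and it is not ``delicate bookkeeping''. The paper never proves this bound through lines: it rewrites $\T(\cA,\cB,\cC)$ as a point--plane incidence count~\eqref{tmp:20.12_1} following~\cite{SZ_inc} and applies Rudnev's incidence theorem, Lemma~\ref{lem:Misha+}, removing the $\(\# \cA \#\cB \# \cC\)^2/p$ term by the observation that the first bound is stronger when $\# \cA \#\cB \# \cC\ge p^2$. Your proposal never invokes Lemma~\ref{lem:Misha+}, and the line route cannot substitute for it. Concretely: your elementary branch $\#\{\ell:\iota_\cX(\ell)\ge\lambda\}\ll(\#\cX)^4/\lambda^2$ is a pair count, hence valid only for $\lambda\ge 2$, not for $\lambda\gtrsim(\#\cX)^2/p$ as you assert. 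In the range $(\#\cX)^2/p\lesssim\lambda<1$ the level sets (whether of $\iota_\cX$ or of $|f_\cX|$) consist mainly of lines meeting $\cX^2$ in a single point, and there are $\asymp p(\#\cX)^2$ of these, so no $p$-free bound for them exists; treating those scales by~\eqref{f:i_scalar} or the $p$-branch of Lemma~\ref{lem:K_M} and optimizing the cutoff returns $\sqrt{p}\(\# \cA \#\cB \# \cC\)^{7/6}$, i.e.\ the third bound, never $\(\# \cA \#\cB \# \cC\)^{3/2}$ --- and this failure occurs exactly in the regime of small sets where the middle bound is the strongest of the three.

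Your proposed remedy, keeping the restriction to $\cL^*$, makes matters worse rather than better: the identities you rely on for the main-term extraction ($\sum_{\ell\in\cL}f_\cX(\ell)=0$, \eqref{f:i_scalar}, Lemma~\ref{lem:i_AB}) hold over all of $\cL$ only. Over $\cL^*$ the constant term of your expansion equals $\(\# \cA \#\cB \# \cC\)^2\#\cL^*/p^3$, which for small sets is far below the main term $\(\# \cA \#\cB \# \cC\)^2/p$, and the linear terms no longer vanish; so the claimed separation of the main term collapses. You are right that over all of $\cL$ the single-point lines inject $\asymp p\,\#(\cA\cap\cB\cap\cC)^2$ into $\sum_\ell f_\cA f_\cB f_\cC$, but the conclusion to draw is the paper's: for the middle bound one must abandon lines altogether and use the plane formulation~\eqref{tmp:20.12_1}, in which the degenerate configurations are excluded by the parametrization, together with Lemma~\ref{lem:Misha+}.
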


\begin{proof}
We  basically repeat the arguments from~\cite{Shkr}. 

Put 
$$
g_\cA(x) = \chi_\cA(x) - \frac{\# \cA}{p},
$$
where $\chi_\cA(x)$ is the 
characteristic function of the set $\cA$. Thus
\begin{equation}
\label{eq:cond Vinh gA}
\sum_{x\in \F_p} g_\cA(x) = 0 \,.
\end{equation}
It is easy to see~\cite{SZ_inc} that the quantity $\T(\cA,\cB,\cC)$  equals the number of incidences  between the planes
\begin{equation}\label{tmp:20.12_1}
\frac{1}{b_1-c_1} X - a_2Y +Z = -\frac{c_1}{b_1-c_1} 
\end{equation}
and the points 
$$
\(a_1, \frac{1}{b_2-c_2}, \frac{c_2}{b_2-c_2} \) 
$$
with $a_1,a_2\in \cA$, $b_1,b_2 \in B$,  $c_1,c_2 \in \cC$.  

Using the function $g_\cA(x)$, we have
\begin{equation}
\label{eq:T sigma}
\T(\cA,\cB,\cC) = \frac{\(\# \cA \#\cB \# \cC\)^2}{p}  + \sigma \,,
\end{equation}
where the sum $\sigma$ 
counts the number of incidences~\eqref{tmp:20.12_1}
with the weight $g_\cA(a_1) g_\cA(a_2)$. 
Hence by~\eqref{f:Vinh},  which implies as we see from~\eqref{eq:cond Vinh gA}
that the condition~\eqref{eq:cond Vinh fg}
is satisfied,   we get 
\begin{equation}
\label{eq:bound1fin}
|\sigma| \le p \# \cA \#\cB \# \cC
\end{equation}
and by Lemma~\ref{lem:Misha+}, we have 
\begin{equation}
\label{eq:bound2}
\sigma \ll \frac{\(\# \cA \#\cB \# \cC\)^2}{p} + \(\# \cA \#\cB \# \cC\)^{3/2} + \# \cA \#\cB \# \cC Z \,.
\end{equation}
We now observe that if $\# \cA \#\cB \# \cC\ge p^2 $, then   
$$
 p \# \cA \#\cB \# \cC\le \(\# \cA \#\cB \# \cC\)^{3/2} \,.
$$
and thus the bound~\eqref{eq:bound1fin} is stronger than~\eqref{eq:bound2}. 
On the other hand, for $\# \cA \#\cB \# \cC< p^2$, the  first term in 
the bound~\eqref{eq:bound2} never dominates and it 
simplifies as 
\begin{equation}
\label{eq:bound2fin}
\sigma \ll 
\(\# \cA \#\cB \# \cC\)^{3/2} +  \# \cA \#\cB \# \cC Z \,.
\end{equation}

Now we   always use $\vartheta_j$ to denote some real numbers 
with $|\vartheta_j| \le 1$, $j=1, 2, \ldots$. 

Let $f_\cA (\ell)$  be defined by~\eqref{eq:fA def}, we also 
define   $f_\cB (\ell)$ and  $f_\cC (\ell)$  similarly. 

We have
\begin{align*}
\T(\cA,\cB,\cC) &= \sum_{\ell \in \cL} \iota_\cA (\ell) \iota_\cB (\ell)  \iota_\cC (\ell) \\
& =  \sum_{\ell \in \cL} f_\cA(\ell)  \iota_\cB (\ell)  \iota_\cC (\ell)   + \frac{\(\# \cA\)^2}{p} \sum_{\ell \in \cL} \iota_\cB (\ell)  \iota_\cC (\ell) \,.
\end{align*}
Hence, by Lemma~\ref{lem:i_AB}, estimating  $\#\( \cB^2\cap\cC^2\)$ trivially as $Z^2$, we obtain
$$
\T(\cA,\cB,\cC)  =
\sum_{\ell \in \cL} f_\cA(\ell)  \iota_\cB (\ell)  \iota_\cC (\ell)   + \frac{\(\# \cA \#\cB \# \cC\)^2}{p} + \vartheta_1 \(\# \cA\)^2 Z^2 \,.
$$

Thus, also using $\# \cA \le Z$ for $\sigma$ defined by~\eqref{eq:T sigma}, we obtain
\begin{equation}
\label{eq:bound3-1}
\sigma = 
\sum_{\ell \in \cL} f_\cA(\ell)  \iota_\cB (\ell)  \iota_\cC (\ell)   +   \vartheta_2 Z^4 \,.
\end{equation}

Furthermore, from the definition of $  f_\cB (\ell)$, 
\begin{equation}
\begin{split}
\label{eq:fii}
\sum_{\ell \in \cL} & f_\cA(\ell)  \iota_\cB (\ell)  \iota_\cC (\ell)   \\
 & =  \sum_{\ell \in \cL} f_A(\ell)  f_\cB (\ell)  \iota_\cC (\ell)  +  \frac{\(\# \cB\)^2}{p} \sum_{\ell \in \cL} f_\cA (\ell) \iota_\cC (\ell) \,. 
 \end{split}
\end{equation}
Using~\eqref{eq:i_A} and then Lemma~\ref{lem:i_AB} again, together with  
trivial bound $\#\( \cB^2\cap\cC^2\) \le Z^2$, we see that 
\begin{align*}
 \sum_{\ell \in \cL} f_\cA (\ell) \iota_\cC (\ell) &
= 
 \(\# \cA\#\cC \)^2 +   \vartheta_2 p Z^2 - \frac{\(\# \cA\#\cC\)^2 (p+1)}{p} \\
 &=  - \frac{\(\# \cA\#\cC\)^2}{p} + \vartheta_2p Z^2 \,, 
\end{align*}
where in fact $\vartheta_2 \in [0,1]$. 
Clearly, 
$$
\frac{\(\# \cA\#\cC\)^2}{p} \le p Z^2\,.
$$
Hence 
\begin{equation}
\label{eq:Z4}
  \frac{\(\# \cB\)^2}{p} \sum_{\ell \in \cL} f_\cA (\ell) \iota_\cC (\ell)   =   \vartheta_3  Z^4\,,
\end{equation}
which after substitution in~\eqref{eq:fii} and then in~\eqref{eq:bound3-1} yields
\begin{equation}
\label{eq:bound3-2}
\sigma =
 \sum_{\ell \in \cL} f_A(\ell)  f_\cB (\ell)  \iota_\cC (\ell)   +  2 \vartheta_4 Z^4\,.
\end{equation}
Finally, the same arguments as in the above and an analogue of~\eqref{eq:Z4}, 
lead us to the
bound 
\begin{align*}
 \sum_{\ell \in \cL} f_A(\ell)  f_\cB (\ell)  \iota_\cC (\ell) &  = 
 \sum_{\ell \in \cL} f_A(\ell)  f_\cB (\ell) f_\cC (\ell) +  \frac{\(\# \cC\)^2}{p} 
    \sum_{\ell \in \cL} f_A(\ell)  f_\cB (\ell) \\
 & = \sum_{\ell \in \cL} f_A(\ell)  f_\cB (\ell) f_\cC (\ell) +  \ \vartheta_5  Z^4\,.
 \end{align*}
 Now, recalling~\eqref{eq:bound3-2}, we obtain
\begin{equation}
\label{eq:bound3-3}
\sigma = \sigma_0  +  3 \vartheta_6 Z^4\,, 
\end{equation}
where 
$$
\sigma_0 = \sum_{\ell \in \cL} f_A(\ell)  f_\cB (\ell) f_\cC (\ell) \,.
$$

It remains to estimate $\sigma_0$.

We fix some real numbers $\Delta_\cA, \Delta_\cB, \Delta_\cC> 0$ and write 
\begin{align*}
\sigma_0 & \ll p\(\Delta_\cA  \# \cB \# \cC + \Delta_\cB  \# \cA \# \cC + \Delta_\cC \# \cA  \# \cB\) \\
& \qquad \qquad \qquad +
\sum_{\substack{\ell \in \cL\\ |f_\cU(\ell) | \ge \Delta_\cU,\ \cU = \cA, \cB, \cC}} \left|f_\cA(\ell)  f_\cB (\ell)  f_\cC (\ell)\right|\, .
 \end{align*}
By the H\"older inequality  
$$
	\sum_{\substack{\ell \in \cL\\ |f_\cU(\ell) | \ge \Delta_\cU,\ \cU = \cA, \cB, \cC}} \left|f_\cA(\ell)  f_\cB (\ell)  f_\cC (\ell)\right| 
	\le
	\prod_{\cU = \cA, \cB, \cC}  \left( \sum_{\substack{\ell \in \cL\\ |f_\cU(\ell) | \ge \Delta_\cU}} \left|f_\cA(\ell) \right|^3 \right)^{1/3} \,.
$$

 Given $F>0$ we note that by  Lemma~\ref{lem:K_M} we have 
 $$
\sum_{\substack{\ell \in \cL\\  F\le \left|f_\cA(\ell)\right| \le 2F}} \left|f_\cU(\ell) \right|^3 \ll \frac{F^3 (\# \cA)^5}{F^4}  
=\frac{ (\# \cA)^5}{F} \,.
$$
It follows that 
$$
\sum_{\substack{\ell \in \cL\\ |f_\cU(\ell) | \ge \Delta_\cU}} \left|f_\cA(\ell) \right|^3 
	\ll\frac{ (\# \cA)^5}{\Delta_\cU} \,.
$$
Hence 
$$
\sigma_0 \ll p(\Delta_\cA  \# \cB  \# \cC + \Delta_\cB \# \cA  \# \cC + \Delta_\cC \# \cA \# \cB)  
+
\frac{\# \cA^{5/3}  \# \cB^{5/3}  \# \cC^{5/3}}{(\Delta_\cA \Delta_\cB \Delta_\cC)^{1/3}} \,. 
$$

We now choose 
$$
\Delta_\cU = \# \cU  \frac{(\# \cA  \# \cB  \# \cC)^{1/6}}{ p^{1/2}}, \qquad \cU = \cA, \cB, \cC\,, 
$$ 
and  obtain
$$
\sigma_0 \ll \sqrt{p} \(\# \cA \#\cB \# \cC\)^{7/6} 
$$
which together with~\eqref{eq:bound3-3} implies
\begin{equation}
\label{eq:bound3fin}
\sigma \ll \sqrt{p} \(\# \cA \#\cB \# \cC\)^{7/6}  +    Z^4\,.
\end{equation}

Combining the bounds~\eqref{eq:bound1fin}, \eqref{eq:bound2fin}
and~\eqref{eq:bound3fin} with~\eqref{eq:T sigma}, we conclude the proof. 
\end{proof}
 
\subsection{Bounds on the number of solutions to some equations with general sets}
\label{sec:number sols gen set}

For   sets  $\cS, \cX, \cY\subseteq\F^*_p$,  we define $N(\cS, \cX,\cY)$ to be the number of solutions to the system of equations:
\begin{equation}
\label{eq:syst xsxtyy}
\begin{split}
&\frac{x_1+s_1}{y_1} = \frac{x_2+s_2}{y_2}   \mand 
\frac{x_1+t_1}{y_1} = \frac{x_2+t_2}{y_2}, \\
&\ \,  s_1,s_2, t_1, t_2 \in \cS, \ s_1 \ne t_1, \ s_2 \ne t_2, \ x_1,x_2 \in \cX,  \  y_1, y_2 \in \cY. 
\end{split}
\end{equation}

We also recall the definition~\eqref{eq:E3}. 

\begin{lemma} 
\label{lem:Bound NSXY}
Let  $\cS, \cX \subseteq \F^*_p$ be arbitrary sets of cardinalities $S$ and $X \le p^{1/2}$
such that $S^2 X \le p^2$, 
 and let $\cY$ be the set of primes  of the interval $[1,Y]$  for some $Y \le p^{1/2}$. Then we have
$$
N(\cS, \cX,\cY)\ll Y \E^{+}_3 (\cS, \cS, \cX)
+S^3 X^{3/2}+S^2 X^2\,.  
$$
\end{lemma}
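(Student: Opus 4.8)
The plan is to fix the two linear equations in~\eqref{eq:syst xsxtyy} and extract the geometric structure they encode. Writing $\lambda = y_1/y_2 \in \F_p^*$, the two equations say that $x_1 + s_1 = \lambda(x_2+s_2)$ and $x_1 + t_1 = \lambda(x_2 + t_2)$. Subtracting eliminates the $x$'s and gives $s_1 - t_1 = \lambda(s_2 - t_2)$, so the ratio $\lambda$ is forced to equal $(s_1-t_1)/(s_2-t_2)$ (both differences are nonzero by the constraints $s_1\ne t_1$, $s_2 \ne t_2$). This is the key reduction: once $s_1,s_2,t_1,t_2$ are chosen, $\lambda$ is determined, and then the single equation $x_1 + s_1 = \lambda(x_2 + s_2)$ is a linear relation between $x_1,x_2,\lambda$.

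First I would parametrize solutions not by $\cY$ directly but by the value $\lambda = y_1/y_2$. For each fixed $\lambda$, the number of pairs $(y_1,y_2)\in\cY^2$ with $y_1 = \lambda y_2$ is controlled by the multiplicative structure of $\cY$; since $\cY$ is a set of primes in $[1,Y]$ with $Y \le p^{1/2}$, a ratio $y_1/y_2$ in $\F_p^*$ that comes from two primes below $p^{1/2}$ lifts uniquely to a rational $y_1/y_2$, so each admissible $\lambda\ne 1$ is represented at most $O(1)$ times, while $\lambda = 1$ contributes the diagonal $y_1=y_2$ giving the factor $Y$. This is precisely where the hypothesis that $\cY$ is a set of \emph{primes} (rather than an arbitrary subset of an interval) is used, and I expect it to be the cleanest way to pin down the $\cY$-dependence as the single factor $Y$ in front of $\E^{+}_3(\cS,\cS,\cX)$.

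With $\lambda$ fixed as $(s_1-t_1)/(s_2-t_2)$, the remaining count splits into two regimes. In the diagonal regime $\lambda = 1$ we need $s_1 - t_1 = s_2 - t_2$ together with $x_1 + s_1 = x_2 + s_2$; summing over the $Y$ choices of $(y_1,y_2)$ on the diagonal and recognizing the resulting system as exactly the defining equations of $\E^{+}_3(\cS,\cS,\cX)$ (matching~\eqref{eq:E3}, with the $\cX$-variables playing the role of the third set) yields the main term $Y\,\E^{+}_3(\cS,\cS,\cX)$. In the off-diagonal regime $\lambda \ne 1$, I would bound the number of quadruples $(s_1,s_2,t_1,t_2)$ together with the induced $(x_1,x_2,\lambda)$ by a collinear-triple or incidence count: the equations $x_1+s_1 = \lambda(x_2+s_2)$, $x_1 + t_1 = \lambda(x_2+t_2)$ are linear in the points and should be converted into a point-plane incidence problem so that Lemma~\ref{lem:TABC} or Lemma~\ref{lem:Misha+} applies. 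Under the size hypotheses $X \le p^{1/2}$ and $S^2 X \le p^2$, the leading incidence term $(\#\cA\#\cB\#\cC)^2/p$ of Lemma~\ref{lem:TABC} should be dominated, and the two geometric error terms should collapse to the claimed $S^3 X^{3/2} + S^2 X^2$.

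The main obstacle I anticipate is the bookkeeping in the off-diagonal case: correctly identifying which three sets of sizes built from $S$ and $X$ feed into $\T(\cA,\cB,\cC)$ so that the output matches $S^3 X^{3/2} + S^2 X^2$ exactly, and verifying that the constraints~\eqref{eq:cond SX} are precisely what is needed to suppress the $(\#\cA\#\cB\#\cC)^2/p$ term and the $Z^4$-type term. In particular I would need to check that the quantity $Z = \max$ of the relevant cardinalities is on the order of $\max\{S,X\}$ and that $Z^4 \ll S^3 X^{3/2} + S^2 X^2$ in the allowed range; this comparison, rather than any conceptual difficulty, is where the stated hypotheses $X < p^{1/2}$ and $S^2 X \le p^2$ earn their keep.
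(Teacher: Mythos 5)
This is essentially the paper's own proof: the same reduction to the common ratio $\lambda=y_1/y_2$, the same dichotomy $\lambda=1$ versus $\lambda\ne 1$, the same use of the primality of $\cY$ (with $Y\le p^{1/2}$) to get multiplicity $\#\cY\le Y$ on the diagonal and $O(1)$ for each fixed $\lambda\ne 1$, the same identification of the $\lambda=1$ case with $\E^{+}_3(\cS,\cS,\cX)$, and the same appeal to Lemma~\ref{lem:TABC} for the off-diagonal count. To settle the bookkeeping you left open: once $\lambda$ is eliminated the system becomes $(x_1+s_1)/(x_1+t_1)=(x_2+s_2)/(x_2+t_2)$, i.e.\ the count is exactly $\T(\cS,\cS,-\cX)$, and you should use the \emph{second} bound of Lemma~\ref{lem:TABC}: under $S^2X\le p^2$ its main term $(S^2X)^2/p$ is dominated by $(S^2X)^{3/2}=S^3X^{3/2}$, while its error term $S^2XZ$ with $Z=\max\{S,X\}\le S+X$ gives $S^3X+S^2X^2\ll S^3X^{3/2}+S^2X^2$. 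In particular, the inequality you proposed to verify, $Z^4\ll S^3X^{3/2}+S^2X^2$, is neither needed nor true in general (it fails when, say, $S\gg X^{3/2}$); the $Z^4$ term belongs to the third bound of Lemma~\ref{lem:TABC}, which is the wrong tool for this step, so your argument goes through provided you commit to the second bound rather than that one or Lemma~\ref{lem:Misha+} directly.
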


\begin{proof}  We derive from~\eqref{eq:syst xsxtyy} that 
\begin{equation}
\label{eq:syst stxxyy}
\frac{x_1+s_1}{x_2+s_2} = \frac{x_1+t_1}{x_2+t_2} =  \frac{y_1}{y_2}  \ne 0.
\end{equation}

First we consider the case when the common value  $\lambda$  of every ratio in the equation~\eqref{eq:syst stxxyy}
is  $\lambda =1$.  In this case we derive   $s_1 - s_2 = t_1-t_2 = x_2-x_1$. Thus the vector
$(s_1, s_2 , t_1,t_2 , x_1, x_2) \in \cS^4\times \cX^2$ can be chosen in 
$\E^{+}_3 (\cS, \cS, \cX)$ ways. So we see from~\eqref{eq:syst stxxyy} (and  discarding the conditions that $y_1$ and $y_2$ are primes) 
that such vectors  contribute
in total  
$$
N_1 = Y\E^{+}_3 (\cS, \cS, \cX)
$$ 
to $N(\cS, \cX,\cY)$. 

By the second inequality of Lemma~\ref{lem:TABC}, using that $S^2 X \le p^2$, we see that the equation 
\begin{equation}
\label{eq:syst xsxt}
\frac{x_1+s_1}{x_2+s_2} = \frac{x_1+t_1}{x_2+t_2},  \quad  s_1,s_2, t_1, t_2 \in \cS, \ x_1,x_2 \in \cX \,,
\end{equation}
has $O(S^3 X^{3/2}+S^2 XZ)$ solutions, where $Z = \max\{S,X\}$. 
Using $Z \le S+X$ we derive 
\begin{align*}
S^3 X^{3/2}+S^2 XZ&\le S^3 X^{3/2}+S^2 X(S+X) \\
& = 
S^3 X^{3/2}+S^3 X +S^2X^2 \ll  S^3 X^{3/2}+S^2X^2 \,.
\end{align*}

However now we consider only solutions  when the common value $\lambda$ of both sides of 
the equation~\eqref{eq:syst xsxt}
satisfies $\lambda \ne 1$. For each $\lambda \ne 1$ we get an equation
of the type $y_1 = \lambda y_2$ which   has at most $1$ solutions  in primes  $y_1,y_2 \le p^{1/2}$.  
Hence  such vectors  contribute
in total  
$$
N_{\ne 1} \ll S^3 X^{3/2}+S^2X^2
$$ 
to $N(\cS, \cX,\cY)$. 

Collecting contributions $N_1$ and $N_{\ne 1}$ of both types,  that is, writing
$N(\cS, \cX,\cY) = N_1 + N_{\ne 1}$, we obtain the result.
\end{proof}

\begin{rem}
\label{rem:Kar bound} 
{\rm
We note that there are no details how this quantity $N(\cS, \cX,\cY)$ is estimated in~\cite{Kar1}. However,
judging by the range where the main result of~\cite{Kar1} is nontrivial it seems that instead of Lemma~\ref{lem:TABC} 
the number of solutions to the equation~\eqref{eq:syst xsxt} is estimated   in~\cite{Kar1} trivially as $O(S^3 X^2)$. 
}
\end{rem}

\begin{rem}
\label{rem:SX-cond} 
{\rm
The condition $S^2 X \le p^2$ of Lemma~\ref{lem:Bound NSXY} is imposed so that 
the second inequality of Lemma~\ref{lem:TABC} simplifies. One can certainly drop this restriction 
and apply  Lemma~\ref{lem:TABC}  in full generality. In turn this may lead to some alternative forms of 
 Theorem~\ref{thm:Gen Set E3}. 
 }
\end{rem}

\subsection{Bounds on the number of solutions to some equations with multiplicative subgroups}
\label{sec:number sols subgr}

Here we give some bound on $\E^{+}_3 (\cG, \cG, \cI)$ with a multiplicative subgroup 
$\cG \subseteq \F_p^*$. 

We always assume that $\cG$ is of order $T \le p^{2/5}$ as otherwise
other, more standard methods work better. 

\begin{lemma} 
\label{lem:E3 subgr}
Let  $\cG \subseteq \F^*_p$ be a multiplicative subgroup 
 of order $T \le p^{2/5}$ and $\cI = [1,X]$. Then we have
\begin{align*}
 \E^{+}_3 (\cG, & \cG, \cI) \\&\le p^{o(1)} \left\{
 \begin{array}{l}  T^{49/20} X\, ,\\
 T^2 X +  T^{4/3} X^{3/2} + T^{11/6} X^2 p^{-1/2} + T^{41/24} X^{3/2}p^{-1/8}\, .
 \end{array}
 \right.
\end{align*}
\end{lemma}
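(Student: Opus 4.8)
The plan is to work from the representation-function form of the energy. Writing $r_{\cG-\cG}(d)=\#\{(g_1,g_2)\in\cG^2:g_1-g_2=d\}$ and $r_{\cI-\cI}(d)=\#\{(x_1,x_2)\in\cI^2:x_1-x_2=d\}$, the defining relation $u_1-u_2=v_1-v_2=w_1-w_2$ of $\E^{+}_3$ fixes a common difference $d$ and realizes it independently twice in $\cG-\cG$ and once in $\cI-\cI$, so that
$$
\E^{+}_3(\cG,\cG,\cI)=\sum_{d\in\bcI} r_{\cG-\cG}(d)^2\, r_{\cI-\cI}(d).
$$
Here $0\le r_{\cI-\cI}(d)\le X$ and $\sum_d r_{\cG-\cG}(d)^2=\E^{+}(\cG)$. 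I would first peel off the diagonal term $d=0$: it forces $g_1=g_2$, $g_3=g_4$ and $x_1=x_2$, contributing exactly $T^2X$, which becomes the leading monomial of the second bound.

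For the first bound I would simply estimate $r_{\cI-\cI}(d)\le X$ and use $\E^{+}_3(\cG,\cG,\cI)\le X\,\E^{+}(\cG)$, that is, inequality~\eqref{eq:E3 E2X}. It then suffices to bound the additive energy of the subgroup. For $T\le p^{2/5}$ the estimate $\E^{+}(\cG)\le p^{o(1)}T^{49/20}$ is available through the result of Cilleruelo and Garaev~\cite{CillGar} (valid precisely in this range, which is why the hypothesis $T\le p^{2/5}$ is imposed) together with the collinear-triple machinery of Lemma~\ref{lem:TABC} applied to $\cA=\cB=\cC=\cG$. This yields the clean bound $T^{49/20}X\,p^{o(1)}$.

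For the second bound I would keep the interval weight and treat the off-diagonal part $d\ne0$. Using that $\cG$ is a multiplicative group, I would rewrite the constraints $g_2,g_2+d\in\cG$ and $g_4,g_4+d\in\cG$ multiplicatively (dividing by $g_2$ and $g_4$), recasting the count as an incidence count between a family of planes and a point set built from $\cG$, $\cG-1$ and $\bcI$, exactly in the spirit of the reduction in the proofs of Lemma~\ref{lem:TABC} and Lemma~\ref{lem:Bound NSXY}. Applying Lemma~\ref{lem:Misha+}, equivalently the three regimes of Lemma~\ref{lem:TABC} (the design regime~\eqref{f:Vinh}, the Rudnev-type regime, and the $f_\cA f_\cB f_\cC$ regime), and recording that the maximal number of collinear points is governed by $\max_d r_{\cG-\cG}(d)$ and by the subgroup–interval intersection, both controlled by Cilleruelo–Garaev~\cite{CillGar}, should produce the remaining terms $T^{4/3}X^{3/2}$, $T^{11/6}X^2p^{-1/2}$ and $T^{41/24}X^{3/2}p^{-1/8}$; the factors $p^{-1/2}$ and $p^{-1/8}$ arise from the main term $(\#\cA\,\#\cB\,\#\cC)^2/p$ in the respective regimes.

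The main obstacle I anticipate is the incidence set-up in the off-diagonal step: one must encode the short-interval structure of $\cI$ (with $X<p^{1/2}$) together with the multiplicative structure of $\cG$ into a single point–plane configuration so that the short interval genuinely reduces the count, and one must separate and bound the degenerate solutions—those for which $r_{\cG-\cG}(d)$ is abnormally large, i.e. points become collinear—via the Cilleruelo–Garaev bound. The delicate bookkeeping is to verify that the three regimes of Lemma~\ref{lem:TABC} collapse to exactly the three stated monomials and that the diagonal contributes only $T^2X$; the hypotheses $T\le p^{2/5}$ and $X<p^{1/2}$ are precisely what make each regime simplify.
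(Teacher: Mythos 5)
Your decomposition $\E^{+}_3(\cG,\cG,\cI)=\sum_{d} r_{\cG-\cG}(d)^2\,r_{\cI-\cI}(d)$, the diagonal term $T^2X$, and the reduction of the first bound to $\E^{+}_3(\cG,\cG,\cI)\le X\,\E^{+}(\cG)$ via~\eqref{eq:E3 E2X} all agree with the paper. The gaps are in how you justify the two key inputs. For the first bound, the estimate $\E^{+}(\cG)\le T^{49/20}p^{o(1)}$ cannot be obtained from Cilleruelo--Garaev plus Lemma~\ref{lem:TABC} with $\cA=\cB=\cC=\cG$: that lemma controls collinear triples, and for $\cA=\cB=\cC=\cG$ it gives at best $\T(\cG,\cG,\cG)\ll T^6/p+T^{9/2}$, from which one cannot extract an additive-energy exponent below the classical Stepanov/Heath-Brown--Konyagin threshold $5/2$; the exponent $49/20$ is a deep separate theorem of Murphy, Rudnev, Shkredov and Shteinikov, which the paper simply cites as a black box~\cite{MRSS}. (Relatedly, $T\le p^{2/5}$ is not the validity range of that energy bound; the paper imposes it so that the Cilleruelo--Garaev bound can be applied in the second part.)

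For the second bound, your plan to recast the off-diagonal count as a point--plane incidence problem and invoke Lemma~\ref{lem:Misha+} or the regimes of Lemma~\ref{lem:TABC} fails structurally: $\E^{+}_3$ imposes two simultaneous linear equations, $u_1-u_2=v_1-v_2$ and $v_1-v_2=w_1-w_2$, whereas Rudnev-type incidence bounds encode a single (bilinear) equation; this is precisely why the paper does not use incidences here. What the paper does instead: since $r_{\cG-\cG}(d)$ is constant on cosets of $\cG$, it writes $R=\sum_{0\ne d\in\tcI} r_{\cG-\cG}(d)^2=\sum_j c_j t_j^2$, where $t_1\ge t_2\ge\cdots$ are the coset values and $c_j=\#(\cC_j\cap\tcI)$, and applies Cauchy--Schwarz, $R\le\bigl(\sum_j t_j^4\bigr)^{1/2}\bigl(\sum_j c_j^2\bigr)^{1/2}$. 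The two factors then require two external inputs, neither of which appears in your sketch: the Stepanov-method bound $t_j\ll T^{2/3}j^{-1/3}$ from~\cite{KoSh}, giving $\sum_j t_j^4\ll T^{8/3}$, and the bound $\sum_j c_j^2\le N(\tcI^*,\cG)\le\(X+TX^2/p+T^{3/4}Xp^{-1/4}\)p^{o(1)}$ of Cilleruelo--Garaev. Multiplying out $T^2X+RX$ yields exactly the four monomials of the second bound; in particular, the factors $p^{-1/2}$ and $p^{-1/8}$ arise from square-rooting the Cilleruelo--Garaev terms $TX^2/p$ and $T^{3/4}Xp^{-1/4}$, not from the main terms $\(\# \cA \#\cB \# \cC\)^2/p$ of Lemma~\ref{lem:TABC} as you suggest. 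Without the coset decomposition, the Cauchy--Schwarz decoupling, and the fourth-moment bound on the $t_j$, the exponents $T^{4/3}X^{3/2}$, $T^{11/6}X^2p^{-1/2}$ and $T^{41/24}X^{3/2}p^{-1/8}$ are out of reach.
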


\begin{proof} By~\cite{MRSS} we have 
$$
\E^{+}(\cS) = \#\{(u_1,u_2 ,v_1,v_2)\in \cG^4~:~u_1+u_2 = v_1+v_2\} \le T^{49/20} p^{o(1)}\,,
$$
which together with~\eqref{eq:E3 E2X} immediately implies the first bound (see also Remark~\ref{rem:E3 vs E2}). 

We now derive the second bound. 
Given a set $\cA  \subseteq\F_p$, we write $r_{-}(\cA;x)$ and $r_{/}(\cA;x)$ for the number of ways $x \in \F_p$ can be expressed as a  sum $a-b$ and $a/b$ with $a,b\in \cA$, 
respectively. 
  
Put 
$$
\tcI = \cI-\cI = \{x-y~:~  x,y \in \cI\} = [-X,X]\, .
$$
Then 
\begin{equation}
\label{eq:E R} 
\E^{+}_3 (\cG, \cG, \cI) \le T^2 X + 4 R X \,,
\end{equation}
where 
$$
R =   \sum_{x \in \cI^*,\, x\neq 0} r^2_{-} (\cG;x) .
$$ 
We note that for $x \in \tcI$ the value of $ r_{-} (\cG;x)$ depends only 
on the coset $\lambda\cG$ with $x \in \lambda\cG$. 

Let $h = (p-1)/T$. 

Consider cosets $\cC_j =\lambda_j \cG$, $j=1, \ldots, h$,  
such that for $x_j \in \cC_j$ one has 
$$
r_{-} (\cG;x_1) \ge \ldots\ge  r_{-} (\cG;x_h) \,.
$$
Let  
$$
c_j =  \#\(\cC_j \cap \tcI\) \mand t_j = r_{-} (\cG;x_j)\,, 
$$ 
where $\cC_j \cap \tcI$ is considered in $\F_p$ (that is, after reducing elements 
modulo $p$), $j=1, \ldots, h$.
First we note that  
\begin{equation}
\label{eq:cj NIG} 
\sum_{j=1}^h c^2_j \le  \sum_{x\in \cG} r_{\cI^*/\cI^*} (x) = N(\cI^*, \cG) \,, 
\end{equation}
where
$$
N(\cI^*, \cG) = \#\{(x,y)\in \cI^*~:~ x/y \in \cG\}
$$
The quantity $N(\cI^*, \cS)$ has been introduced and studied in~\cite{BKS1} 
however the argument  in~\cite{BKS1} 
 is optimised for the case of rather large subgroups 
of order $T$ around $p^{1/2}$. Thus here we use a bound of Cilleruelo and  Garaev~\cite[Theorem~1]{CillGar} which implies that for $T \le p^{2/5}$ we have
\begin{equation}
\label{eq:NIG bound} 
N(\cI^*, \cS)\le \(X+ TX^2/p + T^{3/4}Xp^{-1/4}\)p^{o(1)}
\end{equation}
 and we have followed the scheme of the proof from 
%IS
 %this 
 paper \cite{BKS1}. 

By~\cite[Equation~(3.13)]{KoSh} we have
$$
t_j \ll T^{2/3} j^{-1/3}, \qquad j=1, \ldots, h\,.
$$  
Hence
\begin{equation}
\label{eq:sum tj} 
 \sum_{j=1}^h t^4_j \ll T^{8/3}.
\end{equation}
Finally, using the Cauchy inequality, we derive from~\eqref{eq:cj NIG}, \eqref{eq:NIG bound} and~\eqref{eq:sum tj} that
$$
R^2 \ll \sum_{j=1}^h t^4_j    \sum_{j=1}^h c^2_j 
	\ll T^{8/3}  \(X+ TX^2/p + T^{3/4}X/p^{1/4}\)p^{o(1)}\,,
$$
 which after substitution in~\eqref{eq:E R}  implies the desired result.
 \end{proof}

\subsection{On the additive energy of polynomial  images}
Given an integer  $k\ge 2$ and sets $\cS_1, \dots, \cS_k \subseteq \F_p$ one can generalize the additive energy 
%of $\cS$   as
as  
\begin{align*}
\T^{+}_k (&\cS_1, \ldots, \cS_k)\\
& =\#\{ u_1 + \dots + u_k=v_1+ \dots + v_k~:~  u_i, v_i \in \cS_i, \ i =1, \ldots, k\} \,.
\end{align*}
If $\cS_i = \cS$, $i =1, \ldots, k$ then we write $\T^{+}_k (\cS)$ for $\T^{+}_k (\cS,\dots, \cS)$. 
Clearly, the energy $\T^{+}_k (\cS)$ is translation/dilation invariant and $\T^{+}_2 (\cS) = \E^{+} (\cS)$.  
Using the orthogonality of exponential  functions, we write$$
	\T^{+}_k (\cS_1,\ldots, \cS_k) = \frac{1}{p}  \sum_{\lambda=0}^{p-1}\prod_{j=1}^k \left| \sum_{v_j\in \cS_j} \ep(\lambda v_j)\right|^2
$$
where $\ep(v) = \exp(2\pi i v/p)$  and applying the H{\"o}lder inequality, one sees that 
\begin{equation}\label{f:T_2}
\T^{+}_k (\cS_1, \cS_2, \ldots, \cS_k) \le \prod_{j=2}^{k} \(\T^{+}_k (\cS_1, \cS_j \dots, \cS_j)\)^{1/(k-1)} \,.
\end{equation}

Furthermore, for $k = 2,3, \ldots$ we write
\begin{align*}
\E^{+} (\cS)  &=
\frac{1}{p}  \sum_{\lambda=0}^{p-1}  \left| \sum_{v\in \cS} \ep(\lambda v)\right|^4\\
& \le 
\frac{1}{p}  \sum_{\lambda=0}^{p-1}  \left| \sum_{v\in \cS} \ep(\lambda v)\right|^{2k/(k-1)}
\left| \sum_{v\in \cS} \ep(\lambda v)\right|^{2(k-2)/(k-1)}\,, 
\end{align*}
and using that
$$
\frac{1}{k-1} + \frac{k-2}{k-1} = 1
$$
by the H\"older inequality we derive
\begin{equation}
\label{eq: E2 Tk}
(\E^{+} (\cS))^{k-1} \le  \T^{+}_k (\cS) \T^{+}_1 (\cS)^{k-2} =  \T^{+}_k (\cS) (\# \cS)^{k-2}\,. 
\end{equation}

Now we obtain a nontrivial upper bound for the energy $\T^{+}_k (f(\cA))$ and hence for $\E^{+} (f(\cA)))$
for a non-linear polynomial  $f$ over $\F_p$
in the case when our set $\cA \subseteq \F_p$ has small sum and difference set
$$
\cA \pm \cA = \{a\pm b~:~a,b \in \cA\}\,.
$$
A similar question has been studied, in particular, in~\cite{BT} and~\cite{AMRS}.    
In the proof we follow the method from~\cite{AMRS}. 

\begin{lemma}
\label{lem:pol_images}
Let $f$ be a  polynomial  over $\F_p$ of degree $d\ge 2$. 
Then for  $d=2$ and  sets  $\cA_1, \cA_2, \cA_3 \subseteq \F_p$ with $\# \cA_3 \le \# \cA_1 \le \# \cA_2 \#(\cA_2+\cA_3)$  we have
\begin{align*}
\T^{+}_3   (f (\cA_1) ,   f (\cA_2), f (\cA_3))  
 &\ll 
 \frac{\(\# \cA_1 \# \cA_2 \# (\cA_2+\cA_3)\)^2}{p}\\ 
 &\qquad  \qquad  + \(\# \cA_1 \# \cA_2 \# (\cA_2+\cA_3)\)^{3/2} \,,
\end{align*}
and  for   $d \ge 3$ and a set $\cA \subseteq \F_p$ and we have 
\begin{align*}
\T^{+}_{2^{d-2}+1} (f(\cA)) & \ll \frac{\(\# (\cA-\cA)\)^{2^{d-1}-2} \(\# \cA  \# (\cA+\cA)\)^2}{p}  \\
&\qquad  + \(\# (\cA-\cA)\)^{2^{d-1}-5/2} (\# \cA  \# (\cA+\cA))^{3/2}  \,,
\end{align*}
where the implied constant may depend on $d$.
\end{lemma}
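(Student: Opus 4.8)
The plan is to handle the two cases by separate mechanisms: the quadratic case I would reduce to the collinear--triple count controlled by Lemma~\ref{lem:TABC}, and the higher--degree case I would reduce to the quadratic one by an iterated differencing argument in the spirit of~\cite{AMRS}. For $d=2$ I would first normalise $f$. Since $\T^{+}_3$ is unchanged when all three sets are dilated by a common nonzero scalar or translated by a common constant, I may scale $f$ to be monic and complete the square, reducing to $f(x)=x^2$ after translating the three sets simultaneously (which alters neither their cardinalities nor $\#(\cA_2+\cA_3)$). The energy then counts the solutions of
$$
a_1^2+a_2^2+a_3^2=b_1^2+b_2^2+b_3^2, \qquad a_i,b_i\in \cA_i .
$$
Using $a_i^2-b_i^2=(a_i-b_i)(a_i+b_i)$ and treating the $\cA_2$-- and $\cA_3$--variables together, this single quadratic relation can be rewritten, after a linear change of variables, as the collinearity of a triple of points drawn from $\cA_1^2$, $\cA_2^2$ and $(\cA_2+\cA_3)^2$; the set $\cA_2+\cA_3$ enters precisely because the $\cA_2$-- and $\cA_3$--coordinates appear only through their sum. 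Thus, up to a bounded factor and a diagonal contribution that I would bound separately,
$$
\T^{+}_3(f(\cA_1),f(\cA_2),f(\cA_3))\ll \T(\cA_1,\cA_2,\cA_2+\cA_3).
$$

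Applying the second bound of Lemma~\ref{lem:TABC} with the sets $\cA_1,\cA_2,\cA_2+\cA_3$ then produces the main term $\(\# \cA_1 \# \cA_2 \#(\cA_2+\cA_3)\)^2/p$, the term $\(\# \cA_1 \# \cA_2 \#(\cA_2+\cA_3)\)^{3/2}$, and an error of size $\# \cA_1 \# \cA_2 \#(\cA_2+\cA_3)\, Z$, where $Z$ is the largest of the three cardinalities. The hypotheses $\#\cA_3\le\#\cA_1\le\#\cA_2 \#(\cA_2+\cA_3)$ are exactly what is needed to force $Z^2\le\#\cA_1 \#\cA_2 \#(\cA_2+\cA_3)$ in each of the three cases for $Z$: when $Z=\#\cA_1$ this is the right--hand inequality; when $Z=\#(\cA_2+\cA_3)$ it follows from $\#(\cA_2+\cA_3)\le\#\cA_2 \#\cA_3\le\#\cA_2 \#\cA_1$; and when $Z=\#\cA_2$ it follows from $\#\cA_2\le\#(\cA_2+\cA_3)$. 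Hence the $Z$--term is absorbed into the $3/2$--power term, giving the stated bound for $d=2$.

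For $d\ge 3$ I would induct on the degree. The engine is that for a shift $h\in\cA-\cA$ the difference polynomial $\Delta_h f(x)=f(x+h)-f(x)$ has degree $d-1$, and that $f(a)-f(a')=\Delta_{a-a'}f(a')$. Writing the energy as $\sum_i\bigl(f(a_i)-f(b_i)\bigr)=0$, introducing the shifts $h_i=a_i-b_i\in\cA-\cA$, and applying the Cauchy--Schwarz inequality in the shift variable collapses the count onto a single energy for the degree-$(d-1)$ polynomials $\Delta_h f$, at the cost of a power of $\#(\cA-\cA)$ and of roughly halving the number of summands. Each Cauchy--Schwarz step contributes a further power of $\#(\cA-\cA)$, and since (read from the quadratic base upward) these powers double at successive stages they sum to the geometric series $2+4+\dots+2^{d-2}=2^{d-1}-2$, while the number of summands runs $2^{d-2}+1\to 2^{d-3}+1\to\dots\to 3$. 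The terminal quadratic energy is then bounded by the first part of the lemma with the first set equal to $\cA-\cA$ and with $\cA_2+\cA_3=\cA+\cA$; because only the difference set is iterated, the quantities $\#\cA$ and $\#(\cA+\cA)$ acquire only the powers $2$ and $3/2$ from that final application, and the two error terms differ by the single half--power $(\#(\cA-\cA))^{1/2}$ visible in the exponents $2^{d-1}-2$ and $2^{d-1}-5/2$.

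The main obstacle is the $d=2$ reduction. Making the passage to $\T(\cA_1,\cA_2,\cA_2+\cA_3)$ rigorous requires choosing the linear change of variables so that the $\cA_2$-- and $\cA_3$--variables genuinely collapse to the single set $\cA_2+\cA_3$ with only a bounded multiplicative loss, and isolating the degenerate solutions (for instance those with $a_1=b_1$, where the relevant difference vanishes) to check that they contribute no more than the error terms. In the higher--degree induction the delicate point is instead purely combinatorial: tracking how the number of summands and the power of $\#(\cA-\cA)$ evolve under each Cauchy--Schwarz step, and verifying that the degenerate shifts $h=0$ contribute only lower--order terms, so that the geometric accumulation reproduces the exponents in the statement.
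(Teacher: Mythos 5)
Your $d=2$ argument stands or falls with the claim that the equation $a_1^2+a_2^2+a_3^2=b_1^2+b_2^2+b_3^2$ can be rewritten, by a \emph{linear} change of variables, as collinearity of a triple of points from $\cA_1^2\times\cA_2^2\times(\cA_2+\cA_3)^2$, so that the count becomes $\T(\cA_1,\cA_2,\cA_2+\cA_3)$ and Lemma~\ref{lem:TABC} applies. No such change of variables exists. Both equations define quadrics in $\F_p^6$, but they are not linearly equivalent: after the substitution $u=b_1+b_2$, $v=c_1+c_2$ the energy equation reads $a_1^2+2b_1u-u^2=a_2^2+2c_1v-v^2$, and completing squares turns it into $a_1^2-a_2^2+b_1^2-(b_1-u)^2-c_1^2+(c_1-v)^2=0$, the zero set of a \emph{nondegenerate} rank-$6$ form, whose totally isotropic linear subspaces have dimension at most $3$; whereas the collinearity form $(x_1-z_1)(y_2-z_2)-(x_2-z_2)(y_1-z_1)$ vanishes identically on the $4$-dimensional subspace $\{x_1=z_1,\ x_2=z_2\}$. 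Since a linear bijection carries linear subspaces to linear subspaces of the same dimension, the energy quadric cannot be mapped onto the collinearity quadric. This is precisely the obstacle you flag in your last paragraph, and it is fatal rather than technical. The paper never passes through $\T$: it feeds the quadric directly into the point--plane incidence bound of Lemma~\ref{lem:Misha+} (Rudnev), taking points $(f(a_1)-\alpha u^2,\,2\alpha u,\,c_1)$ and planes $Z_1+b_1Z_2-2\alpha vZ_3=f(a_2)-\alpha v^2$. (Your verification that $Z^2\le\#\cA_1\#\cA_2\#(\cA_2+\cA_3)$ under the stated hypotheses is exactly the paper's treatment of the collinear-points term $k\#\cQ$ in Lemma~\ref{lem:Misha+}, so that part transfers.)

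For $d\ge3$ your engine (pair-differencing plus H\"older, with geometrically accumulating powers of $\#(\cA-\cA)$) is the paper's, but your base case does not exist. By your own bookkeeping the summands run $2^{d-2}+1\to2^{d-3}+1\to\dots\to3$, i.e.\ $d-3$ steps, so the terminal object is $\T^{+}_3(\cH,f_3(\cA),f_3(\cA))$ with $f_3$ \emph{cubic} (degree $d-(d-3)=3$), not quadratic; it is not of the form $\T^{+}_3(f(\cA_1),f(\cA_2),f(\cA_3))$ for a single quadratic $f$, and choosing sets so that the \emph{bound} has the desired shape does not make the \emph{hypothesis} of part one hold. If instead you difference one step further, you are left with the four-variable quantity $\T^{+}_2(\cH,g(\cA))$, $g$ quadratic, which a six-variable $\T^{+}_3$ bound cannot control (energies with fewer summands are what one deduces \emph{from} such bounds via H\"older, as in the paper's inequality relating $\E^{+}$ and $\T^{+}_k$, not conversely). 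The paper therefore proves a separate cubic base case: for an arbitrary passive set $\cH$, $\T^{+}_3(\cH,f_3(\cA),f_3(\cA))\ll(\#\cH\#(\cA-\cA)\#(\cA+\cA))^2/p+(\#\cH\#(\cA-\cA)\#(\cA+\cA))^{3/2}$, obtained from Lemma~\ref{lem:Misha+} via the identity $b_1^3-b_2^3=\tfrac{3}{4}(b_1-b_2)(b_1+b_2)^2+\tfrac{1}{4}(b_1-b_2)^3$, which is what brings $\cA+\cA$ and $\cA-\cA$ in simultaneously. The exponents then come out as iteration cost $4+8+\dots+2^{d-2}=2^{d-1}-4$ plus $2$ (main term) or $3/2$ (error term) from the cubic case; your series $2+4+\dots+2^{d-2}$ conflates the final differencing step with the base case, which is why your accounting reproduces the main-term exponent $2^{d-1}-2$ but cannot consistently produce the error exponent $2^{d-1}-5/2$.
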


\begin{proof}
For $d=2$, making a linear changing of the variables one can assume that
 $f (Z) = \alpha Z^2 + \beta \in \F_p[Z]$ with $\alpha \neq 0$. 
Then, clearly, the quantity $\T^{+}_3 (f (\cA))$ is equal to the number 
of solutions to 
$$
f(a_1) + 2 \alpha b_1 (b_1+b_2) - \alpha (b_1+b_2)^2 
=
f(a_2) + 2 \alpha c_1 (c_1+c_2) - \alpha (c_1+c_2)^2    \,, 
$$ 
%where all six variables are from $\cA$. 
where $a_1,a_2 \in \cA_1$, $b_1,c_1 \in \cA_2$, $b_2, c_2 \in \cA_3$. 
Changing variables $b_1+b_2=u$, $c_1+c_2=v$, leads to the equation 
\begin{align*}
f(a_1) & + 2 \alpha b_1u - \alpha u ^2   =
f(a_2) + 2 \alpha c_1 v - \alpha v^2, \\ &a_1,a_2 \in \cA_1,\ b_1,c_1  \in \cA_2,\ u,v\in \cA_2 +\cA_3    \,.
\end{align*}

We now consider the set of points  
$$
\cQ = \{\(f(a_1) - \alpha u^2 , 2 \alpha u,  c_1\)~:~ a_1 \in \cA_1,\ c_1\in \cA_2,\ u\in \cA_2+\cA_3 \}
$$
and the set of planes
\begin{align*}
\Pi = \{Z_1 + b_1 Z_2 - 2\alpha v Z_3 & = f(a_2) - \alpha v^2  ~:\\
& ~ a_2 \in \cA_1,\ b_1\in \cA_2,\ v\in \cA_2 +\cA_3 \}
\end{align*}
as in  Lemma~\ref{lem:Misha+}. 
Clearly, $\# \cQ  = \# \Pi \ll \# (\cA_2 +\cA_3) \# \cA_1 \# \cA_2$.  
Examining the second and then the first and the third components of points in $\cQ$ 
we see  the maximum number of collinear points in $\cQ$ is  
$$
k \le \max\{ \#\cA_1, \# \(\cA_2 + \cA_3\) \} \,.
$$ 
Using $\# (\cA_2 + \cA_3) \le \# \cA_2 \# \cA_3 \le  \# \cA_1 \# \cA_3$, 
and 
$\# \cA_1 \le \# \cA_2 \#(\cA_2+\cA_3)$
one can check that the second term in the 
  bound of Lemma~\ref{lem:Misha+} never dominates  and we 
obtain the desired result for $d=2$.

Thus, we start with the case $d=3$. 
In fact for the purpose of the induction, we need to establish a more general result. 
In particular, we assume that we are given another set $\cH  \subseteq \F_p$ with 
\begin{equation}
\label{eq:H small} 
\# \cA \ll \# \cH \ll \# (\cA + \cA) \# (\cA - \cA) \,,
\end{equation}
say, 
and we estimate 
 $\T^{+}_3\(\cH, f (\cA), f (\cA)\)$.
 We proceed as in the proof of~\cite[Proposition~2.12,  Claim~(b)]{AMRS}. 
As above  making a change  of the variables one can also assume that  
$f(x) = \alpha x^3 + \beta x + \gamma \in \F_p[Z]$ with $\alpha \neq 0$.

Then, clearly, the quantity  $\T^{+}_3\(\cH, f (\cA), f (\cA)\)$ is equal to the number 
of solutions to 
\begin{align*}
h_1   + 3 &\alpha  (b_1-b_2)\(\frac{(b_1+b_2)^2}{4} + \frac{(b_1-b_2)^2}{12}\) + \beta (b_1-b_2) \\
&  =	h_2+ 3 \alpha  (c_1-c_2)\( \frac{(c_1+c_2)^2}{4} + \frac{(c_1-c_2)^2}{12} \) + \beta (c_1-c_2)\,,\\
&\qquad \qquad h_1,h_2 \in \cH, \ b_1,b_2, c_1,c_2   \in \cA  \,.
\end{align*}
Changing variables $b_1+b_2=u_1$, $b_1-b_2=u_2$, $c_1+c_2=v_1$, $c_1-c_2=v_2$ we 
obtain the equation 
\begin{align*}
h_1  + 3 \alpha  & u_2\(\frac{u_1^2}{4} + \frac{u_2^2}{12}\) + \beta u_2 
=	h_2 + 3 \alpha  v_2\(\frac{v_1^2}{4} + \frac{v_2^2}{12}\) + \beta v_2,\\
&h_1,h_2   \in \cA, \ u_1,v_1\in \cA +\cA , \ u_2,v_2\in \cA -\cA  \,.
\end{align*}
Now using Lemma~\ref{lem:Misha+}, with  the set of points  
\begin{align*}
\cQ & = \{\(h_1 + \beta u_2+   \alpha u_2^2/4, 3 \alpha u_2/4 , v_1^2 \)~:
\\ 
& \qquad  \qquad  \qquad  \qquad  \qquad  \qquad  \quad 
h_1  \in \cH,\ u_2 \in \cA -\cA,\, v_1 \in \cA+\cA  \}
\end{align*}
and the set of planes
\begin{align*}
\Pi &= \{Z_1 + u_1^2 Z_2 - 3\alpha v_2 Z_3/4 = h_2 + \beta v_2 + \alpha v^2_2/4  ~:\\ 
& \qquad  \qquad  \qquad  \qquad  \qquad  \qquad  \quad   h_2 \in \cH,\, v_2 \in \cA -\cA,\, u_1 \in \cA + \cA \}\,.
\end{align*}
Clearly,  $$
\# \cQ = \# \Pi   \ll  \# \cH  \# (\cA -\cA) \# (\cA +\cA)  \,.
$$
We now apply Lemma~\ref{lem:Misha+}, where we have 
$$
k \le  \max\{ \# (\cA -\cA), \# (\cA +\cA), \# \cH\}\,.
$$
Thus 
under the condition~\eqref{eq:H small} 
Lemma~\ref{lem:Misha+} implies that 
\begin{equation}
\label{eq: induct base}
\begin{split}
 \T^{+}_3\(\cH, f (\cA), f (\cA)\)  & \ll \frac{\(\#\cH \# (\cA-\cA) \# (\cA+\cA)\)^2}{p}\\
 &\qquad \quad +\(\#\cH \# (\cA-\cA)\# (\cA+\cA)\)^{3/2} \,, 
\end{split}
\end{equation}
 which in particular gives the desired bound on  $\T^{+}_3\(f (\cA)\)$ for $d=3$.

We know that for any $u\in \F_p$ the following holds  
$$
f(Z+u) - f(Z) = d u g_u (Z) \,,
$$
with some polynomial $g_u \in \F_p [Z]$ depending on $f$ and $u$, of degree $\deg g_u = \deg f - 1 = d-1$. 
Thus $\T^{+}_{2^{d+1}+1} (f(\cA))$ equals the number of solutions to the equation 
\begin{align*}
f(a_1) + d u_1 g_{u_1} &(b_1) + \ldots + d   u_{2^d} g_{u_{2^d}} (b_{2^d}) \\
&= f(a_2) + d v_1 g_{v_1} (c_1) + \ldots + d v_{2^d} g_{v_{2^d}} (c_{2^d}) \,,
\end{align*}
where $a_1,a_2, b_j,c_j \in \cA$ and $u_j,v_j \in \cA - \cA$. 
Hence by~\eqref{f:T_2}, we see that 
\begin{align*}
\T^{+}_{2^{d+1}+1} (f(\cA))& \le \(\# (\cA-\cA)\)^{2^{d+1}} \max_{u\in \cA - \cA} \T^{+}_{2^{d}+1} (f(\cA), g_{u} (\cA), \dots, g_{u}  (\cA))\\
&= \(\# (\cA-\cA)\)^{2^{d+1}} \T^{+}_{2^{d}+1} (f(\cA), f_{d-1} (\cA), \ldots, f_{d-1} (\cA))
\end{align*}
for some polynomial  $f_{d-1}  \in \F_p [Z]$  of degree $d-1$. 
Clearly, using the same argument (which does not make any use of the first set) 
one can obtain in a similar way  that for any set  $\cH  \subseteq \F_p$ we have 
\begin{equation}
\label{eq: induct step}
\begin{split}
\T^{+}_{2^{d}+1} & \(\cH, f_{d-1}  (\cA), \ldots, f_{d-1} (\cA)\) \\
&\quad \le  \(\# (\cA-\cA)\)^{2^{d}} \T^{+}_{2^{d-1}+1} \(\cH, f_{d-2} (\cA), \ldots, f_{d-2} (\cA)\)
\end{split}
\end{equation}
for some polynomial  $f_{d-2} \in \F_p [Z]$  of degree $d-2$. 
Iteratively, using~\eqref{eq: induct step} with $\cH = f(\cA)$,   we derive
\begin{align*}
\T^{+}_{2^{d-2}+1} (f(\cA)) & \le \(\# (\cA-\cA)\)^{2^{d-2} + \ldots + 4} \T_3 (f(\cA), f_{3} (\cA), f_{3} (\cA)) \\
& =  \(\# (\cA-\cA)\)^{2^{d-1} - 4 } \T_3 (f(\cA), f_{3} (\cA), f_{3} (\cA))\,,
\end{align*}
for some cubic  polynomial  $f_{3}  \in \F_p [Z]$. 
Finally, applying bound~\eqref{eq: induct base}  
we obtain
\begin{align*}
\T^{+}_{2^{d-2}+1} (f(\cA)) & \ll \frac{\(\# (\cA-\cA)\)^{2^{d-1}-2} \(\# \cA \# (\cA+\cA)\)^2}{p} \\
&\qquad \quad +
	\(\# (\cA-\cA)\)^{2^{d-1}-5/2}  \(\# \cA \# (\cA+\cA)\)^{3/2} 
\end{align*}
as required. 
\end{proof}

For intervals  $\cA = \cI$, the  statement of Lemma~\ref{lem:pol_images} simplifies as follows.
Clearly,  it is enough to present these bounds only for initial intervals $\cI = [1,X]$.

\begin{cor}
\label{cor:pol_images int}
Let $f$ be a  polynomial  over $\F_p$ of degree $d\ge 2$ and let $\cI = [1,X]$ be an interval
of length $X \le p^{2/3}$.
Then for  $d=2$   we have
$$
\T^{+}_3   \(f (\cI) \) \ll  X^{9/2} \,,
$$
and  for   $d \ge 3$ we have 
$$
\T^{+}_{2^{d-2}+1} \(f(\cI)\) \ll   X^{2^{d-1}+1/2}  \,,
$$
where the implied constant may depend on $d$.
\end{cor}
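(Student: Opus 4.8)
The plan is to specialise Lemma~\ref{lem:pol_images} to the case $\cA_1=\cA_2=\cA_3=\cI$ when $d=2$, and to $\cA=\cI$ when $d\ge 3$, and then to simplify the resulting estimates using the elementary additive structure of an interval. As already observed before the statement, it suffices to treat the initial interval $\cI=[1,X]$: for a general interval $[a+1,a+X]$ one has $f([a+1,a+X])=g([1,X])$ with $g(W)=f(W+a)\in\F_p[Z]$ of the same degree as $f$, so a bound on $[1,X]$ transfers verbatim.

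First I would record the three cardinalities that enter the bounds of Lemma~\ref{lem:pol_images}. For $\cI=[1,X]$ we have $\#\cI=X$, while the sumset and difference set are again intervals,
$$
\cI+\cI=[2,2X],\qquad \cI-\cI=[-(X-1),X-1],
$$
so that $\#(\cI+\cI)=2X-1\ll X$ and $\#(\cI-\cI)=2X-1\ll X$. In particular the cardinality hypotheses of Lemma~\ref{lem:pol_images} are satisfied: for $d=2$ the chain $\#\cA_3\le\#\cA_1\le\#\cA_2\#(\cA_2+\cA_3)$ reads $X\le X\le X(2X-1)$, which holds for all $X\ge 1$.

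Next I would substitute these cardinalities into the two displayed bounds. For $d=2$ the product satisfies $\#\cA_1\#\cA_2\#(\cA_2+\cA_3)\asymp X^{3}$, and the first part of Lemma~\ref{lem:pol_images} gives
$$
\T^{+}_3\(f(\cI)\)\ll \frac{X^{6}}{p}+X^{9/2}.
$$
For $d\ge 3$ we have $\#(\cI-\cI)\asymp X$ and $\#\cI\,\#(\cI+\cI)\asymp X^{2}$, whence the second part of the lemma yields
$$
\T^{+}_{2^{d-2}+1}\(f(\cI)\)\ll \frac{X^{2^{d-1}+2}}{p}+X^{2^{d-1}+1/2}.
$$

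The only step that uses the hypothesis $X\le p^{2/3}$ is the comparison of the two terms on each right-hand side. In both displays the two terms coincide exactly when $X^{3/2}=p$, that is, when $X=p^{2/3}$; for $X\le p^{2/3}$ we have $X^{3/2}\le p$, so the term carrying the denominator $p$ is dominated by the corresponding pure power of $X$, and discarding it produces the asserted bounds $X^{9/2}$ and $X^{2^{d-1}+1/2}$. I do not expect any genuine obstacle here: the entire analytic content sits in Lemma~\ref{lem:pol_images}, and the corollary is a clean specialisation in which $X\le p^{2/3}$ is precisely the threshold at which the ``$p$-term'' is absorbed.
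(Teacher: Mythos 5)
Your proposal is correct and is essentially the paper's own argument: the paper treats this corollary as an immediate specialisation of Lemma~\ref{lem:pol_images} to $\cA_1=\cA_2=\cA_3=\cI$ (resp.\ $\cA=\cI$), using $\#(\cI\pm\cI)\ll X$ and the observation that $X\le p^{2/3}$ is exactly the threshold at which the terms with denominator $p$ are absorbed. Your verification of the hypothesis $\#\cA_3\le\#\cA_1\le\#\cA_2\,\#(\cA_2+\cA_3)$ and the translation-invariance reduction to $[1,X]$ match the paper's (largely implicit) reasoning.
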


We now record the bounds on the additive energy of polynomial images which are implied by 
Corollary~\ref{cor:pol_images int} combined with~\eqref{eq: E2 Tk}.

\begin{cor}
\label{cor:pol_images energy}
Let $f$ be a  polynomial  over $\F_p$ of degree $d\ge  2$ and let $\cI = [1,X]$ be an interval 
of length $X \le p^{2/3}$. 
Then  for $d=2$ we have 
$$
\E^{+} \(f (\cI)\)  \ll  X^{11/4}   
$$
and  for   $d \ge 3$ we have 
$$
\E^{+} \(f (\cI)\)  \ll  X^{3-1/2^{d-1}}   
$$
where the implied constant may depend on $d$.
\end{cor}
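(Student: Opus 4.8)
The plan is to derive both bounds directly from the higher-energy estimates of Corollary~\ref{cor:pol_images int} via the H\"older-type inequality~\eqref{eq: E2 Tk}, applied to the set $\cS = f(\cI)$. The only auxiliary observation needed is the trivial cardinality bound $\# f(\cI) \le \# \cI = X$, since $f(\cI)$ is the image of the interval $\cI = [1,X]$ under $f$; no lower bound on $\# f(\cI)$ is required. The hypothesis $X \le p^{2/3}$ enters only through the invocation of Corollary~\ref{cor:pol_images int} and so carries over automatically.

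First I would treat the case $d = 2$. Taking $k = 3$ in~\eqref{eq: E2 Tk} gives
$$
\(\E^{+}(f(\cI))\)^{2} \le \T^{+}_3(f(\cI))\, \# f(\cI).
$$
Substituting the bound $\T^{+}_3(f(\cI)) \ll X^{9/2}$ from Corollary~\ref{cor:pol_images int} together with $\# f(\cI) \le X$, the right-hand side is $O(X^{11/2})$, and taking square roots yields $\E^{+}(f(\cI)) \ll X^{11/4}$.

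For $d \ge 3$ I would take $k = 2^{d-2}+1$ in~\eqref{eq: E2 Tk}, so that $k-1 = 2^{d-2}$ and $k-2 = 2^{d-2}-1$, giving
$$
\(\E^{+}(f(\cI))\)^{2^{d-2}} \le \T^{+}_{2^{d-2}+1}(f(\cI))\, \(\# f(\cI)\)^{2^{d-2}-1}.
$$
Inserting $\T^{+}_{2^{d-2}+1}(f(\cI)) \ll X^{2^{d-1}+1/2}$ from Corollary~\ref{cor:pol_images int} and $\# f(\cI) \le X$, the right-hand side is $O\(X^{2^{d-1}+1/2}\cdot X^{2^{d-2}-1}\) = O\(X^{2^{d-1}+2^{d-2}-1/2}\)$. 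Taking $2^{d-2}$-th roots, the exponent becomes
$$
\frac{2^{d-1}+2^{d-2}-1/2}{2^{d-2}} = 2 + 1 - \frac{1}{2^{d-1}} = 3 - \frac{1}{2^{d-1}},
$$
which is exactly the claimed bound $\E^{+}(f(\cI)) \ll X^{3-1/2^{d-1}}$.

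Since everything reduces to substituting two already-proved estimates and performing an elementary exponent computation, there is no substantial obstacle here; the only point requiring minor care is the exponent bookkeeping in the $d \ge 3$ case and confirming that the factor $\(\# \cS\)^{k-2}$ in~\eqref{eq: E2 Tk} is absorbed by the trivial bound $\# f(\cI) \le X$ rather than by anything sharper.
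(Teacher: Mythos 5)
Your proof is correct and is exactly the argument the paper intends: the paper itself simply records that the corollary follows from Corollary~\ref{cor:pol_images int} combined with~\eqref{eq: E2 Tk}, and your choices $k=3$ for $d=2$ and $k=2^{d-2}+1$ for $d\ge 3$, together with the trivial bound $\# f(\cI)\le X$, carry this out with the correct exponent arithmetic.
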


\begin{rem}
\label{rem:Imrove BT} 
{\rm 
We  recall that
\begin{equation}
\label{eq:Triangle}
\#(\cA-\cA ) \le \frac{\(\#\(\cA+\cA\)\)^2}{\#\cA}, 
\end{equation}
which follows from the Ruzsa triangle inequality~\cite[Chapter~1, Theorem~8.1]{Ruzsa}, see also~\cite[Lemma~9]{BT}).
From Lemma~\ref{lem:pol_images} together with~\eqref{eq:Triangle}
one can derive that if $\# \cA \#(\cA+ \cA) \# (\cA - \cA) \le p^2$, then 
for $d\ge 3$ 
$$
	\#(\cA+\cA )^{4-14/2^{d}} \#(f(\cA)+ f(\cA) ) \gg |A|^{5-6/2^{d}}  
$$
and thus
$$
	\#(\cA+\cA ) + \#(f(\cA)+ f(\cA) ) \gg |A|^{1+1/(5\cdot 2^{d-1}-7)} .
$$
This improves~\cite[Theorem~1]{BT} which gives the exponent $1 + 1/(16\cdot 6^d)$
and under a more stringent condition $\# \cA \le p^{1/2}$.  
We also have a similar result for $d=2$. 
}
\end{rem}

\section{Proofs of main results}
 
\subsection{Proof of Theorem~\ref{thm:Gen Set E3}}

We have
$$
\left| W_\chi(\cI, \cS;\balpha, \bbeta)\right|  \le \sum_{x \in \cI}  \left| \sum_{s\in \cS}   \alpha_s     \chi(s+x) \right| .
$$
Thus, using $\overline \chi$ to denote the complex conjugate character to $\chi$, by the Cauchy inequality we derive
\begin{equation}
\label{eq:WV}
\begin{split}
\left| W_\chi(\cI, \cS;\balpha, \bbeta)\right|^2 & \le X \sum_{x \in \cI}  \left| \sum_{s\in \cS}   \alpha_s     \chi(s+x) \right|^2\\
& = X \sum_{x \in \cI}   \sum_{s,t \in \cS }   \alpha_s  \overline{\alpha_t}  \chi(s+x) \overline \chi(t+x) \\
& = XV + O(SX^2)\,,
  \end{split}
\end{equation}
where  
$$
V  =   \sum_{\substack{s,t \in \cS \\s\ne t}}   \alpha_s  \overline{\alpha_t}  \sum_{x \in \cI}   \chi(s+x) \overline \chi(t+x).  
$$

We fix some integers $Y, Z\ge 1$ with $4YZ \le X$ and denote by $\cY$ the set of primes of the interval $[Y,2Y]$.

Applying the same transformation as in the work of Fouvry 
and Michel~\cite[Equations~(4.3) and~(4.4)]{FouMich} and write
$$
V \le \frac{p^{o(1)}}{YZ}\sum_{\substack{s,t \in \cS \\s\ne t}}  \sum_{y \in \cY} \sum_{x \in \bcI} 
\left| \sum_{z=Z+1}^{2Z} \eta_z  \chi(s+x+yz) \overline \chi(t+x+yz)  \right| 
$$
with some complex numbers $\eta_z$ satisfying $|\eta_z| = 1$ and the new interval 
$$
\bcI = [-X,X]\, .
$$

Now, using the multiplicativity of $\chi$,   we obtain 
\begin{equation}
\label{eq:V-1}
V \le \frac{p^{o(1)}}{YZ}\sum_{\substack{s,t \in \cS \\s\ne t}}  \sum_{x \in \bcI}   \sum_{y\in \cY}
\left| \sum_{z=Z+1}^{2Z} \eta_z   \chi\(\frac{s+x}{y}+z\) \overline \chi\( \frac{t+x}{y}+ z\) \right| .
\end{equation}
For each pair $(\lambda, \mu) \in \F_p^2$ we denote by $\nu(\lambda, \mu)$ the number 
of solutions to the system of equation
$$
\frac{s+x}{y} = \lambda, \quad \frac{t+x}{y} = \mu, \qquad
( s,t,x,y) \in \cS^2 \times\bcI\times \cY,  \ s\ne t.
 $$
Thus we can re-write~\eqref{eq:V-1} as 
\begin{equation}
\label{eq:V-2}
V \le \frac{p^{o(1)}}{YZ}\sum_{(\lambda, \mu) \in \F_p^2} \nu(\lambda, \mu)
\left| \sum_{z=Z+1}^{2Z} \eta_z   \chi\(\lambda+z\) \overline \chi\( \mu+ z\) \right| \,.
\end{equation}

Clearly, 
$$
\sum_{(\lambda, \mu) \in \F_p^2} \nu(\lambda, \mu) \ll S^2XY
\mand 
\sum_{(\lambda, \mu) \in \F_p^2} \nu(\lambda, \mu)^2= N(\cS, \bcI, \cY). 
 $$
 We now fix some integer $r\ge 1$ and write 
$$
\nu(\lambda, \mu)  =  \nu(\lambda, \mu)^{1-1/r}  \(\nu(\lambda, \mu)^{2}\)^{1/2r}.
$$
Applying the H{\"o}lder inequality we derive from~\eqref{eq:V-2} that 
\begin{align*}
V^{2r}  &\le \frac{p^{o(1)}}{Y^{2r}Z^{2r}}
\(\sum_{(\lambda, \mu) \in \F_p^2} \nu(\lambda, \mu) \)^{2r-2}
\sum_{(\lambda, \mu) \in \F_p^2} \nu(\lambda, \mu)^2\\
& \qquad \qquad \qquad \qquad \quad \sum_{(\lambda, \mu) \in \F_p^2}
\left| \sum_{z=Z+1}^{2Z} \eta_z   \chi\(\lambda+z\) \overline \chi\( \mu+ z\) \right|^{2r}\\
 &\le \frac{S^{4r-4} X^{2r-2} p^{o(1)}}{Y^{2}Z^{2r}} N(\cS, \bcI, \cY)\\
& \qquad \qquad \qquad \qquad \quad \sum_{(\lambda, \mu) \in \F_p^2}
\left| \sum_{z=Z+1}^{2Z} \eta_z   \chi\(\lambda+z\) \overline \chi\( \mu+ z\) \right|^{2r}. 
\end{align*}
By the  condition~\eqref{eq:cond SX}
we see that Lemma~\ref{lem:Bound NSXY} applies and yields
\begin{equation}\label{eq:V-3}
V^{2r}\le   \frac{S^{4r-4} X^{2r-2}}{Y^{2}Z^{2r}}  \( Y \E^{+}_3 (\cS, \cS, \bcI)+S^3 X^{3/2}+S^2X^2\) \sigma   p^{o(1)} , 
\end{equation}
where 
$$
\sigma = \sum_{(\lambda, \mu) \in \F_p^2}
\left| \sum_{z=Z+1}^{2Z} \eta_z   \chi\(\lambda+z\) \overline \chi\( \mu+ z\) \right|^{2r}.
$$
Furthermore, expanding and changing the order of summation, we derive
\begin{align*}
\sigma
&=  \sum_{(\lambda, \mu) \in \F_p^2}
\sum_{z_1, \ldots z_{2r}=Z+1}^{2Z} 
\prod_{i=1}^r \eta_{z_i} \chi\(\lambda+z_i\) \overline \chi\( \mu+ z_i\)  \\
& \qquad \qquad  \qquad \qquad  \qquad \qquad 
 \prod_{i=r+1}^{2r} \overline \eta_{z_i} \overline \chi\(\lambda+z_i\)  \chi\( \mu+ z_i\) \\
 & \le  \sum_{z_1, \ldots z_{2r}=Z+1}^{2Z} \left| \sum_{\lambda \in \F_p} 
 \prod_{i=1}^r   \chi\(\lambda+z_i\)  \overline \chi\(\lambda+z_{r+i}\) \right|^2.
 \end{align*}

Using the Weil bound in the form given by~\cite[Corollary~11.24]{IwKow} if
$(z_1, \ldots, z_r)$ is not a permutation of $(z_{r +1}, \ldots, z_{2r})$, 
and the trivial bound otherwise, we derive
\begin{equation}
\label{eq:sum 2r}
\sigma
 \ll Z^{2r} p +  Z^{r} p^2, 
\end{equation}
(see also~\cite[Lemma~12.8]{IwKow} that underlies the Burgess method). 

We now choose 
$$Y = \fl{2X p^{-1/r}} \mand Z = \fl{p^{1/r}},
$$ 
(note that due to the condition $X \ge p^{1/r}$ this is an admissible choice), 
so that~\eqref{eq:sum 2r} becomes
$$
\sigma \ll Z^{2r} p,
$$
which after the  substitution in~\eqref{eq:V-3} becomes 
$$
V^{2r}\le   \frac{S^{4r-4} X^{2r-2}}{Y^{2}} 
\( Y \E^{+}_3 (\cS, \cS, \bcI)+S^3 X^{3/2}+S^2 X^2 \)
p^{1+o(1)}\,.
$$
In turn, substituting this in~\eqref{eq:WV} after simple calculations  we derive 
 the desired result.

\subsection{Proof of Theorem~\ref{thm:Subg}}

Clearly,  we need the conditions
\begin{equation}
\label{eq:cond1}
5\zeta + 2\xi > 2 \mand  \zeta+\xi > 1/2 
\end{equation}
to make sure that the terms in the bound of Theorem~\ref{thm:Gen Set E3}
that do not depend on $\E^{+}_3 (\cS, \cS, \bcI)$ are nontrivial (provided that 
$r$ is large enough depending only on $\zeta$ and $\xi$).

Now substituting the first bound  of Lemma~\ref{lem:E3 subgr}
into  Theorem~\ref{thm:Gen Set E3} obtain a nontrivial result provided
\begin{equation}
\label{eq:cond2}
40\zeta  + 31\xi > 20
\end{equation}
and  $r$ is large enough.

 It is  also useful to observe that since $TX \le p^{2/5 + 1/2+o(1)}$ the second 
 bound of Lemma~\ref{lem:E3 subgr} simplifies as 
 $$
  \E^{+}_3 (\cG,  \cG, \cI) \le 
 \( T^2 X +  T^{4/3} X^{3/2}  + T^{41/24} X^{3/2}p^{-1/8}\)  p^{o(1)}\, .
 $$
Hence, substituting this bound   
into  Theorem~\ref{thm:Gen Set E3} obtain a nontrivial result provided
\begin{equation}
\label{eq:cond3}
9\zeta  + 16 \xi > 6 \mand  
36 \zeta+ 55\xi > 21
\end{equation}
(again for a sufficiently large   $r$).

So,  to have a nontrivial estimate, the parameters $\zeta$ and $\xi$ must satisfy~\eqref{eq:cond1} and at least one out of~\eqref{eq:cond2} and~\eqref{eq:cond3}.  Now, after simple, but somewhat tedious, calculations one derives the desired result. 

\subsection{Proof of Theorem~\ref{thm:Primes}}

For the first sum we write 
$$
 \sum_{\substack{q\le Q\\q~\text{prime}}}  \left|  \sum_{\substack{r \le R\\ r~\text{prime}}}
 \chi(f(q) +r)\right| = 
 \sum_{\substack{q\le Q\\q~\text{prime}}} e^{i \psi_q} \sum_{\substack{r \le R\\ r~\text{prime}}}
 \chi(f(q) +r)
$$
where $0 \le \psi_q < 2\pi$ is the argument of the second sum (which depends only on $q$). 
We introduce the weights $\alpha_s$ and $\beta_x$, where 
\begin{itemize}
\item 
$\alpha_s$ is supported 
on the set $\cS = \{f(q) ~:~q\le Q,\ q~\text{prime}\}$ and is defined as
$$
\alpha_s = \frac{1}{d} \sum_{\substack{q\le Q,\, f(q) = s\\q~\text{prime}}} e^{i \psi_q}\,,
$$
thus $|\alpha_s| \le 1$;
 \item  $\beta_x$ is the characteristic  function of primes $r \le R$.
\end{itemize}
With the above notations, 
$$
 \sum_{\substack{q\le Q\\q~\text{prime}}}  \left|  \sum_{\substack{r \le R\\ r~\text{prime}}}
 \chi(f(q) +r)\right| = d W_\chi(\cI, \cS;\balpha, \bbeta).
 $$
 We also use a similar representation for the second sum.

The result is instant if one combines Corollary~\ref{cor:Gen Set Energy} 
with the bound  on the additive energy of polynomial images from 
Corollary~\ref{cor:pol_images energy}.

 \section*{Acknowledgement}

The authors are grateful to Changhao Chen and  Bryce Kerr for their comments on the 
initial draft.

During the preparation of this work the first  author was supported in part by 
the Program of the Presidium of the Russian Academy of Sciences~–01 ``Fundamental Mathematics and its Applications' under grant PRAS-18-01'' 
and the  second author   by 
the  Australian Research Council  Grants DP170100786 and DP180100201.

\end{document}